\newcommand{\newsection}[1]
{\subsection{#1}\setcounter{theorem}{0} \setcounter{equation}{0}
\par\noindent}
\newtheorem{theorem}{Theorem}
\newtheorem{lemma}[theorem]{Lemma}
\newtheorem{corollary}[theorem]{Corollary}
\newtheorem{proposition}[theorem]{Proposition}
\newcommand{\cal}{\mathcal}
\newcommand{\R}{{\mathbb R}}
\newcommand{\ep}{\varepsilon}
\newcommand{\hf}{\frac 12}
\newcommand{\cd}{\,\cdot\,}
\newcommand{\Kob}{{\cal K}}
\begin{document}

\title[Strichartz estimates for Dirichlet-wave equations in two dimensions]
{Strichartz estimates for Dirichlet-wave  equations  in two dimensions with
applications}
\thanks{The authors were supported in part by the NSF. The third author was supported in part by NSFC 10871175 and 10911120383.}

\author{Hart F. Smith}
\address{Department of Mathematics, University of Washington, Seattle, WA 98195}
\author{Christopher D. Sogge}
\address{Department of Mathematics,  Johns Hopkins University,
Baltimore, MD 21218}
\author{Chengbo Wang}
\address{Department of Mathematics,  Johns Hopkins University,
Baltimore, MD 21218}

\begin{abstract}
We establish the Strauss conjecture for nontrapping obstacles
when the spatial dimension $n$ is two.  As pointed out in \cite{HMSSZ}
this case is more subtle than $n=3$ or $4$ due to the fact that
the arguments of the first two authors \cite{SmSo00},
Burq \cite{B} and Metcalfe \cite{M} showing that local Strichartz estimates
for obstacles
imply global
ones require that the Sobolev
index, $\gamma$, equal $1/2$ when $n=2$.   We overcome this difficulty by interpolating
between energy estimates ($\gamma =0$) and ones for $\gamma=\frac12$
that are generalizations of Minkowski space estimates of Fang
and the third author \cite{FaWa2}, \cite{FaWa}, the second author \cite{So08}
and Sterbenz \cite{St05}.
\end{abstract}
\subjclass[2000]{Primary, 35L71; Secondary 35B45, 35L20}
\keywords{Strichartz estimates, Strauss conjecture, obstacles}

\maketitle

\newsection{Introduction}

In a recent series of papers, \cite{DMSZ}, \cite{HMSSZ}, techniques have been
developed to prove general Strichartz estimates for wave equations outside of
nontrapping obstacles.  These papers relied on ideas that were used to prove
the more standard $L^q_tL^r_x$ Strichartz estimates for obstacles in  \cite{B},
\cite{M} and \cite{SmSo00}.  As was shown in \cite{HMSSZ}, though, a limitation
arises in the proof which is only relevant when the spatial dimension, $n$,
equals two.  This is that the $TT^*$ arguments involving the Christ-Kiselev
lemma \cite{ChKi01} {\it a priori} require that the Sobolev regularity for the
data in the homogeneous estimates be equal to $\hf$ when $n=2$, with similar
restrictions on the estimates for the inhomogeneous wave equation.

As we shall see in this paper, even though we can only directly prove
Strichartz estimates involving Sobolev regularity of $\gamma=\hf$, for some
applications if we interpolate with
trivial (energy) estimates, this is enough.  In particular, we shall be able
to establish
the Strauss conjecture for obstacles when $n=2$.
Specifically, if $\Kob \subset \mathbb{R}^2$ is a compact nontrapping obstacle
with smooth boundary, then we shall be able
to show that there are global small-amplitude solutions of the equation
\begin{equation}\label{1.1}
\begin{cases}
\square u(t,x)=F_p(u(t,x)), \quad (t,x)\in \mathbb{R}_+\times
\mathbb{R}^2 \backslash \Kob
\\
u(t,x)=0, \quad x\in \partial\Kob
\\
u|_{t=0}=f, \quad \partial_t u|_{t=0}=g
\end{cases}
\end{equation}
provided that
\begin{equation}\label{1.2}
|F_p(u)|+|u|\, |F'_p(u)| \lesssim |u|^p, \quad \text{for } \, |u|\le 1,
\end{equation}
and $p$ is larger than the critical exponent for \eqref{1.1} when $n=2$, which
is $p_c=(3+\sqrt{17})/2$, or equivalently
\begin{equation}\label{1.3}
p^2-3p-2>0, \quad \text{and } \, \, p>0.
\end{equation}
Fortunately, the critical family of estimates that we require for proving
bounds for this
equation involve $\gamma=\hf$ (see Figure \ref{figu1} below).  All the other estimates,
including
the ones we shall use, come
from interpolating between these and energy estimates.

Let us state our existence results for \eqref{1.1} with more precision.
We first introduce some notation.  We will denote
$$
\Omega = {\mathbb R}^2 \backslash \Kob
$$
and let
$\dot H^\gamma (\Omega)$ be the homogeneous Sobolev space of order $\gamma$
on $\Omega$, with norm
$$\|f\|_{\dot H^\gamma(\Omega)}= \|\, (\sqrt{-\Delta_D})^\gamma f\|_{L^2(\Omega)},$$
with $\Delta_D$ the Dirichlet-Laplacian in $\Omega$.  If
$0\le \gamma <\hf$
then for $f\in C^\infty({\Omega})$ we have
$$\|f\|_{\dot H^\gamma(\Omega)} \approx \|\tilde f\|_{\dot H^\gamma({\mathbb R}^2)},$$
if $\tilde f(x)=f(x)$, $x\in\Omega$ and $\tilde f(x)=0$,
$x\in\Kob$.
Here $\dot H^\gamma({\mathbb R}^2)$ denotes the homogeneous Sobolev space with
norm
$$\|g\|^2_{\dot H^\gamma({\mathbb R}^2)}=(2\pi)^{-2}\int_{{\mathbb R}^2} |\, |\xi|^\gamma
\Hat g(\xi)|^2\, d\xi,$$
with $\Hat g$ denoting the the Fourier transform of $g$. See also the
introduction of \cite{HMSSZ} for a discussion of the space
$\dot{H}^\gamma(\Omega)$.

If we also let $\partial_j = \partial_{x_j}$, $j=1,2$ and
\begin{equation}\label{1.4}
\{Z\}=\{\partial_1, \partial_2, \, x_1\partial_2-x_2\partial_1\, \},
\end{equation}
then we can state our existence theorem for \eqref{1.1}. The norm used in \eqref{1.5}
is certainly not the best possible; see the remarks following Corollary \ref{coro-3.3}.

\begin{theorem} \label{thm1.1} Let $n=2$ and $\Kob$, $\Omega$ be as above.
If $p_c<p<5$, then there is an
$\varepsilon_0=\varepsilon_0(p,\Omega)>0$  such that \eqref{1.1} has a global
solution satisfying
$$ Z^\alpha u(t, \cd)\in
L^{p-1}(\Omega)\,,\qquad|\alpha|\le 1$$
provided that the initial data
$(f,g)=(u|_{t=0}, \partial_tu|_{t=0})$ satisfies
$f|_{\partial\Omega}=0$
and
\begin{equation}\label{1.5}
\sum_{|\alpha|\le 2} \| Z^\alpha f\|_{ L^{q_p}(\Omega)}
+\sum_{|\alpha|\le 1}\|Z^\alpha g\|_{L^{q_p}(\Omega)}<\varepsilon,
\quad 0<\varepsilon<\varepsilon_0,
\end{equation}
with $\frac 1{q_p}=\frac 1{p-1}+\frac 12$.
If $p\ge5$, then there is a global solution of \eqref{1.1} if
\eqref{1.5} holds with $q= q_{\tilde p}$, for some $\tilde p \in (p_c,5)$.
\end{theorem}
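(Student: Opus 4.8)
The plan is to run a Picard iteration in a norm $X$ adapted to the weighted Strichartz estimates for the Dirichlet-wave equation, reducing the theorem to (a) a linear estimate for the iterates in terms of \eqref{1.5} and (b) a nonlinear estimate that forces $p>p_c$. Set $u_{-1}\equiv 0$ and, given $u_{k-1}$, let $u_k$ solve the linear Dirichlet problem
\[
\square u_k = F_p(u_{k-1}) \ \text{ in } \R_+\times\Omega,\qquad u_k|_{\partial\Omega}=0,\qquad (u_k,\partial_t u_k)|_{t=0}=(f,g).
\]
The space $X$ will be built from $\sum_{|\alpha|\le 1}\sup_t\|Z^\alpha u(t,\cd)\|_{L^{p-1}(\Omega)}$ together with the mixed weighted space-time norms at Sobolev level $\gamma=\hf$ occurring in the critical family of estimates referred to after \eqref{1.3} (cf.\ Figure \ref{figu1}), again applied to $Z^\alpha u$ with $|\alpha|\le 1$; its dual $X'$ is the norm in which the forcing term is measured. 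Granting
\[
\|v\|_X \lesssim \sum_{|\alpha|\le 2}\|Z^\alpha f\|_{L^{q_p}(\Omega)}+\sum_{|\alpha|\le 1}\|Z^\alpha g\|_{L^{q_p}(\Omega)}+\|G\|_{X'}
\]
for solutions $v$ of $\square v=G$, $v|_{\partial\Omega}=0$, $(v,\partial_tv)|_{t=0}=(f,g)$, and
\[
\|F_p(u)-F_p(w)\|_{X'}\lesssim \bigl(\|u\|_X^{\,p-1}+\|w\|_X^{\,p-1}\bigr)\|u-w\|_X,
\]
a standard fixed-point argument shows that for $\varepsilon$ small the iterates $u_k$ remain in a ball of radius $\sim\varepsilon$ in $X$ and form a Cauchy sequence; the limit is the asserted global solution, and membership in $X$ gives $Z^\alpha u(t,\cd)\in L^{p-1}(\Omega)$, $|\alpha|\le 1$.

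For the linear estimate the essential input is the Strichartz estimate for the Dirichlet-wave equation outside the nontrapping obstacle $\Kob$ at Sobolev regularity $\gamma=\hf$, obtained by upgrading local-in-time estimates to global ones; every other space-time norm entering $X$ and $X'$, sitting at a regularity in $(0,\hf)$, is then recovered by interpolating the $\gamma=\hf$ estimates against the conservation-of-energy estimates ($\gamma=0$), exactly as advertised after \eqref{1.2}. (The inhomogeneous estimate at $\gamma=\hf$ is produced from the homogeneous one by a $TT^*$ argument and the Christ--Kiselev lemma \cite{ChKi01}, which in $n=2$ is available precisely at $\gamma=\hf$.) To insert the vector fields $Z=\{\partial_1,\partial_2,\,x_1\partial_2-x_2\partial_1\}$, which commute with $\square$ but not with the Dirichlet condition, one splits $\Omega$ into a neighborhood of $\Kob$ and its complement: away from $\Kob$ the $Z$ essentially commute with the equation and the free weighted Strichartz estimates of Fang--Wang, Sogge and Sterbenz apply with their $Z$-counterparts; near $\Kob$ one uses that $\Kob$ is compact and nontrapping---so local energy decay holds---together with elliptic regularity to trade spatial $Z$-derivatives for $\partial_t$-derivatives and interior control, and finite propagation speed to patch the two regions. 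The hypotheses \eqref{1.5} are converted into the required homogeneous Sobolev bounds on $(f,g)$ by Sobolev embedding, using $\frac1{q_p}=\frac1{p-1}+\frac12$; here the compatibility condition $f|_{\partial\Omega}=0$ is needed and, at this regularity, is the only one required.

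The heart of the matter is the nonlinear estimate, and this is where $p^2-3p-2>0$ enters. By \eqref{1.2}, for $|\alpha|\le 1$ one has $|Z^\alpha F_p(u)|\lesssim |u|^{p-1}|Z^\alpha u|$, and $F_p(u)-F_p(w)$ is controlled similarly. One distributes $|u|^{p-1}$ over $p-1$ factors by a weighted H\"older inequality in the space-time norm defining $X'$; since the weighted Strichartz norms in $X$ are radial in nature, one first trades a single rotation vector field for angular $L^\infty$ control via a one-dimensional Sobolev embedding on $S^1$, which is why one $Z$ on $u$ suffices and why the range is capped at the conformal exponent $p=5$ in two dimensions. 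The bookkeeping of the weights and Lebesgue exponents then closes exactly when the $n=2$ Strauss condition $p>p_c=(3+\sqrt{17})/2$, i.e.\ $p^2-3p-2>0$, holds. For $p\ge 5$ one observes that a small solution has $|u|\le 1$, so \eqref{1.2} gives $|F_p(u)|\lesssim|u|^p\le|u|^{\tilde p}$ and $|F_p'(u)|\lesssim|u|^{\tilde p-1}$; hence the hypotheses hold with $\tilde p$ in place of $p$, and the argument run with $\tilde p\in(p_c,5)$ and data in $L^{q_{\tilde p}}$ yields the solution.

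I expect the main obstacle to be the simultaneous calibration of the interpolation and weight parameters: one must check that the single curve of $\gamma=\hf$ Dirichlet-wave estimates is, after interpolation with energy, strong enough to drive the contraction---that the admissible region (Figure \ref{figu1}) contains both a norm in which the iterates are bounded linearly by \eqref{1.5} and a dual norm in which $|u|^{p-1}|Z^{\le 1}u|$ is bounded by $\|u\|_X^p$ for every $p\in(p_c,5)$---and, intertwined with this, that the near-obstacle analysis loses nothing in those exponents even though only one vector field is available on $u$.
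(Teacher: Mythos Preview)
Your overall architecture---Picard iteration, weighted Strichartz at the critical regularity, interpolation with energy, near/far splitting, Sobolev on $S^1$, and the $p\ge 5$ reduction---matches the paper's proof. However, there is a genuine gap in how you propose to obtain the $Z$-derivative estimates at the iteration regularity.

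The iteration norm in the paper is not at $\gamma=\tfrac12$; it sits at $\gamma_p=1-\tfrac2{p-1}\in\bigl(\tfrac{5-\sqrt{17}}4,\tfrac12\bigr)$, with $s_{\gamma_p}=p-1$. More importantly, your description ``interpolate $\gamma=\tfrac12$ Strichartz against $\gamma=0$ energy, then insert $Z$ via the near/far splitting'' does not close. The near-obstacle argument you sketch (local energy decay plus elliptic regularity to trade $Z$ for $\partial_t$) is exactly the content of the paper's Lemma~\ref{lemma3.1}, but that machinery from \cite{HMSSZ} is available \emph{only} at $\gamma=\tfrac12$ in dimension two; it does not directly produce $Z$-derivative estimates at $\gamma_p<\tfrac12$. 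Conversely, energy estimates \emph{with} $Z$-derivatives on $\Omega$ are not automatic, since $Z$ does not preserve the Dirichlet condition, so there is no second endpoint at $\gamma=0$ with one $Z$ already inserted.

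The paper's device (Corollaries~\ref{coro-3.2} and~\ref{coro-3.3}) is a \emph{two-parameter} interpolation: establish estimates with $k$ powers of $\Gamma$ at $\gamma=\tfrac12$ for all even $k$ (Lemma~\ref{lemma3.1}), and estimates with \emph{zero} powers of $\Gamma$ for all $0<\gamma<1$ (equation~\eqref{3.11}); then interpolate simultaneously in $k$ and $\gamma$ to land at $k=1$, $\gamma=\gamma_p$. This requires verifying that the mixed norms (the $X_{r,\gamma}$ near/far norms, in polar coordinates away from $\Kob$ and in Cartesian coordinates on a compact manifold with boundary near $\Kob$) form analytic interpolation scales in both parameters, which the paper handles via mixed-norm Calder\'on--Zygmund/UMD theory. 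Your last paragraph senses this difficulty but does not supply the mechanism; without it, the linear estimate with one $Z$ at $\gamma_p$ is not established.
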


Note that by Sobolev embedding
\begin{multline}\label{sob}
\sum_{|\alpha|\le1}\bigl(\|Z^\alpha f\|_{\dot H^{\gamma_p}(\Omega)}+
\|Z^\alpha g\|_{\dot H^{\gamma_p-1}(\Omega)}\bigr)
\\
\le C_p\Bigl(\, \sum_{|\alpha|\le2}\|Z^\alpha f\|_{L^{q_p}(\Omega)}+
\sum_{|\alpha|\le1}\|Z^\alpha g\|_{L^{q_p}(\Omega)}\, \Bigr),
\end{multline}
where $q_p$ is as above and
$$\gamma_p=1-\frac2{p-1}$$
%
is the scaling exponent for the equation
$\square u =|u|^p$ in two dimensions (see, e.g. \cite{So08}).  In earlier works
(\cite{DMSZ}, \cite{HMSSZ}) the smallness assumption on the data was based
on the size of the $\dot H^{\gamma_p}(\Omega)\times \dot H^{\gamma_p-1}(\Omega)$
norm of derivatives of $(f,g)$ (see also \eqref{19} below).  For technical reasons, we are led to making the
somewhat stronger assumption \eqref{1.5} involving the $L^{q_p}$-norms, but this too
is natural.

To prove Theorem~\ref{thm1.1} we shall require certain Strichartz estimates
in $\Omega$.  We shall postpone formulating them until they are needed in \S 3,
but they are related to the following 2-dimensional Minkowski space estimates,
which involve the angular mixed-norm spaces
$$
\|f\|_{L^r_{|x|}L^2_\theta({\mathbb R}^2)}=
\left(\, \int_0^\infty \, \Bigl( \, \int_0^{2\pi}
|f(\rho(\cos \theta, \sin \theta))|^2 \, d\theta
\, \Bigr)^{r/2}\, \rho \, d\rho\, \right)^{1/r}\,.
$$

\begin{proposition}\label{prop1.2}  Let $P=\sqrt{-\Delta}$ in ${\mathbb R}^2$.
Assume that
$(q,r)\ne (\infty,\infty)$
\begin{equation}\label{1.6}
q,r >2 \quad \text{and } \, \, \frac 1q < \frac 12 - \frac 1r\,,
\end{equation}
or $(q,r)=(\infty,2)$.
Then
\begin{equation}\label{1.7}
\bigl\|\, e^{-itP}g \,  \bigr\|_{L^q_tL^r_{|x|}L^2_\theta({\mathbb R}\times {\mathbb R}^2)}
\le C_{q,r}\|g\|_{\dot H^{\gamma}({\mathbb R}^2)}\,,
\quad \gamma= 2(\tfrac 12-\tfrac 1r)-\tfrac 1q\,.
\end{equation}
\end{proposition}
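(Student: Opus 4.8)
The plan is to obtain \eqref{1.7} by interpolating between two endpoint families, an ``easy'' one on the edge $q=\infty$ and a ``hard'' one on the edge $r=\infty$; no further frequency decomposition is needed. For the first family, note that when $2\le r<\infty$ the inequality between $L^2$ and $L^r$ norms on the circle (finite measure, exponent $\ge2$) gives $\|h(\rho\,\cdot\,)\|_{L^2_\theta}\lesssim_r\|h(\rho\,\cdot\,)\|_{L^r_\theta}$ pointwise in $\rho$, hence $\|h\|_{L^r_{|x|}L^2_\theta}\lesssim\|h\|_{L^r(\R^2)}$; combining this with the Sobolev embedding $\dot H^{1-2/r}(\R^2)\hookrightarrow L^r(\R^2)$ and with the fact that $e^{-itP}$ is unitary on every homogeneous Sobolev space ($e^{-itP}$ and $(-\Delta)^{s/2}$ are both Fourier multipliers) gives
\begin{equation}\label{eq:qinfty}
\bigl\|e^{-itP}g\bigr\|_{L^\infty_tL^r_{|x|}L^2_\theta}\lesssim\bigl\|e^{-itP}g\bigr\|_{L^\infty_t\dot H^{1-2/r}(\R^2)}=\|g\|_{\dot H^{1-2/r}(\R^2)},\qquad 2\le r<\infty.
\end{equation}
This is \eqref{1.7} on the edge $q=\infty$, where $\gamma=2(\hf-\tfrac1r)$; the case $r=2$ is the energy estimate, which disposes of $(q,r)=(\infty,2)$.

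The substantive point is the edge $r=\infty$: for $2<q<\infty$ (so that $\gamma=1-\tfrac1q$),
\begin{equation}\label{eq:rinfty}
\bigl\|e^{-itP}g\bigr\|_{L^q_tL^\infty_{|x|}L^2_\theta}\lesssim\|g\|_{\dot H^{1-1/q}(\R^2)}.
\end{equation}
I would decompose $g$ into angular Fourier modes, $g=\sum_{k\in\mathbb Z}g_k$ with $g_k=h_k(\rho)e^{ik\theta}$ in polar coordinates. Since $e^{-itP}$ commutes with rotations it preserves modes, $e^{-itP}g_k=u_k(t,\rho)\,e^{ik\theta}$ with $u_k(t,\rho)=\int_0^\infty e^{-its}J_k(\rho s)(\mathcal H_k h_k)(s)\,s\,ds$ ($\mathcal H_k$ the order-$k$ Hankel transform, $J_k$ the Bessel function). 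Hence Plancherel in $\theta$ on the left, together with Minkowski's inequality to pull the $\ell^2_k$-sum out of $L^q_tL^\infty_{|x|}L^2_\theta$ (permissible because $q,\infty\ge2$ and $L^2_\theta$ is itself an $L^2$-space), reduces \eqref{eq:rinfty} to the one-dimensional bound $\|u_k\|_{L^q_tL^\infty_\rho}\lesssim\|g_k\|_{\dot H^{1-1/q}(\R^2)}$ with constant \emph{independent of $k$}, followed by summing $\sum_k\|g_k\|^2=\|g\|^2$. A standard $TT^*$ argument --- invoking only the ordinary, non-endpoint Hardy--Littlewood--Sobolev inequality in the time variable, which is precisely what the hypothesis $q>2$ affords --- then reduces this to the kernel estimate
\begin{equation}\label{eq:kernel}
\sup_{\rho,\rho'>0}\;\Bigl|\,\int_0^\infty e^{-i\tau s}\,J_k(\rho s)\,J_k(\rho' s)\,s^{2/q-1}\,ds\,\Bigr|\;\lesssim\;|\tau|^{-2/q},\qquad\text{uniformly in }k\in\mathbb Z.
\end{equation}
Scaling $\tau$ (and the radii) out of \eqref{eq:kernel} leaves the uniform-in-$k$ bound $\sup_{a,b>0}\bigl|\int_0^\infty e^{\mp is}J_k(as)J_k(bs)\,s^{2/q-1}\,ds\bigr|\lesssim1$, and \emph{this is the step I expect to be the main obstacle}. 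It is settled by the uniform asymptotics of Bessel functions: where $as$ or $bs$ is $\lesssim k$ the contribution is super-polynomially small in $k$ (as $J_k(x)$ is negligible for $x\ll k$); the transition (Airy) zones $as\sim k$ or $bs\sim k$ contribute $O(1)$; and on $as,bs\gtrsim k$ one inserts $J_k(x)=\sqrt{2/\pi x}\,\cos(x-\tfrac{k\pi}2-\tfrac\pi4)+\cdots$ and estimates the resulting sum of oscillatory integrals, whose phases are linear in $s$, by integration by parts, the borderline terms being controlled with the integrability of $s^{2/q-1}$ at the origin ($-1<2/q-1<0$). This is the argument of Sterbenz \cite{St05} and of Fang and the third author \cite{FaWa2}, \cite{FaWa}; compare also \cite{So08}.

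To finish, I would assemble \eqref{1.7} from \eqref{eq:qinfty} and \eqref{eq:rinfty}. Fix $(q,r)$ with $q,r>2$, $(q,r)\ne(\infty,\infty)$, and $\tfrac1q<\hf-\tfrac1r$. If $r=\infty$ this is \eqref{eq:rinfty}; if $q=\infty$ it is \eqref{eq:qinfty}; and if $q,r<\infty$, then the strict inequality $\tfrac1q+\tfrac1r<\hf$ guarantees that $(\tfrac1q,\tfrac1r)$ lies on the open segment joining a point $(\tfrac1{q_1},0)$ with $2<q_1<\infty$ to a point $(0,\tfrac1{r_1})$ with $2\le r_1<\infty$. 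Complex interpolation of the mixed-norm spaces $L^q_tL^r_{|x|}L^2_\theta$ (an iterated interpolation of $L^p$-spaces with the inner $L^2_\theta$ factor frozen), combined with $[\dot H^{\gamma_1},\dot H^{\gamma_2}]_\theta=\dot H^{\gamma_\theta}$, then yields \eqref{1.7} for this $(q,r)$; an elementary computation confirms that the interpolated Sobolev index is always $\gamma=2(\hf-\tfrac1r)-\tfrac1q$. Thus the whole proof comes down to the single hard estimate \eqref{eq:kernel}.
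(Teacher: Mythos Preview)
Your overall strategy---handle the edge $q=\infty$ by Sobolev embedding and energy, handle the edge $r=\infty$ by a $TT^*$ argument, then interpolate---mirrors the paper's shape, and the $q=\infty$ edge is done identically. The gap is in the $r=\infty$ step: the pointwise kernel bound you reduce to is \emph{false} for $2<q<4$. Take $k=0$, $a=1$, and let $b\to0^+$. Since $J_0(bs)\to1$ and the non-oscillatory part of $e^{-is}J_0(s)$ at infinity is $(2\pi s)^{-1/2}e^{-i\pi/4}$, the integrand behaves like a constant times $s^{2/q-3/2}$, which is not integrable at infinity when $q<4$; a more careful computation gives $\bigl|\int_0^\infty e^{-is}J_0(s)J_0(bs)\,s^{2/q-1}\,ds\bigr|\sim b^{1/2-2/q}\to\infty$. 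Your zone decomposition does not rescue this: for $k=0$ there is no small-argument suppression, and with $a=1$ the phase $as-s=0$ is genuinely stationary, so no integration by parts is available against it. Restricting your kernel argument to $q>4$ (where it does work) and interpolating with the $q=\infty$ edge recovers only the sub-region $4/q+2/r<1$, strictly smaller than the triangle in Proposition~\ref{prop1.2}.

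The paper avoids this precisely by \emph{not} dispensing with frequency localization. After restricting to a unit dyadic shell, the integral representation $J_k(y)=(2\pi)^{-1}(-i)^k\int_0^{2\pi}e^{iy\cos\theta-ik\theta}\,d\theta$ is unwound to produce a kernel $\hat\alpha\bigl((t-s)-r\cos\theta\bigr)$ with $\alpha\in\mathcal S$ coming from the frequency cutoff; the hard estimate then reduces to the elementary Lemma~\ref{mainest} on $\int_0^{2\pi}|\alpha(m-r\cos\theta)|\,d\theta$, whose proof is two changes of variable. The Schwartz cutoff is exactly what replaces the homogeneous weight $s^{2/q-1}$ (the source of the divergence above) by something integrable at infinity. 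The full estimate is then recovered by scaling and Littlewood--Paley. So the phrase ``no further frequency decomposition is needed'' is where the argument goes wrong: the decomposition is the missing idea.
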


See the following figure for the range of exponents in \eqref{1.7}:
\begin{figure*}[h]
\centering
\includegraphics[width=0.9\textwidth]{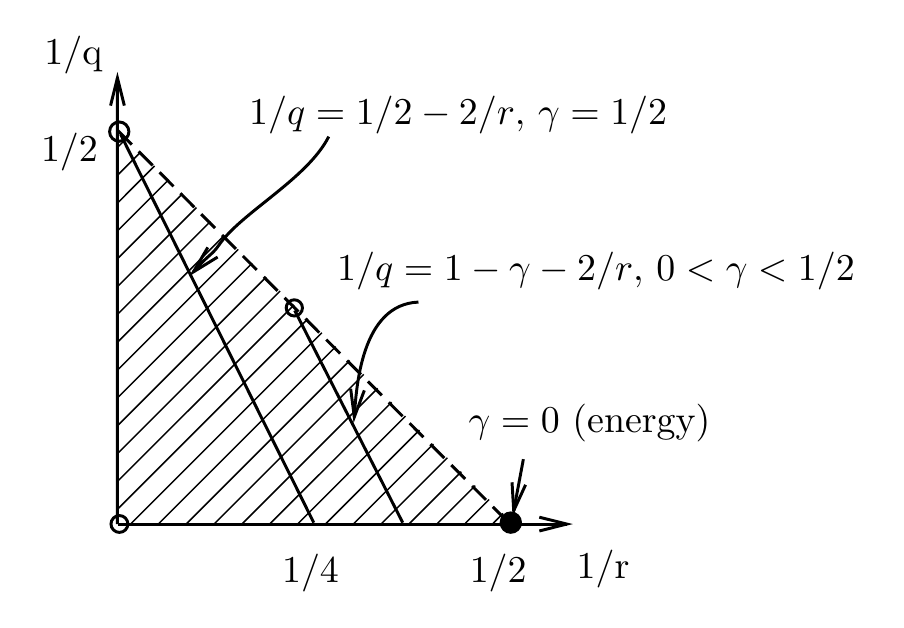}
\caption{Minkowski space exponents}\label{figu1}
\end{figure*}

We mention that Sterbenz \cite{St05} proved related estimates where $L^2_\theta$
is replaced by $L^r_\theta$ (with norms of different regularity on the right).
Related results
are also due to Fang and Wang \cite{FaWa2}, \cite{FaWa} and Sogge \cite{So08} (for $n=3$).
Since the proof of
\eqref{1.7} is simple, we shall present it in \S 2 for the sake of
completeness. It can be adapted to give a slightly different proof of the
corresponding results in \cite{St05} and \cite{FaWa}.

This paper is organized as follows.  In the next section, we shall
prove Proposition~\ref{prop1.2}. We shall also show how it can be
used to give a simple proof of Glassey's theorem \cite{G} which says
that the Strauss conjecture holds for ${\mathbb R}_+\times {\mathbb
R}^2$, since this will serve as a model for the more technical
arguments that are needed to establish Theorem~\ref{thm1.1}. In the
final section, we shall formulate and prove the variants of
\eqref{1.7} that we require and then present the proof of this
theorem.

\newsection{Estimates for ${\mathbb R}_+\times {\mathbb R}^2$ and
Glassey's Theorem}

We shall first prove Proposition~\ref{prop1.2} and then give the simple
argument showing how it can be used to prove Glassey's Theorem that in
${\mathbb R}_+\times {\mathbb R}^2$ there is small amplitude global existence
for $\square u = |u|^p$ when $p>p_c=(3+\sqrt{17})/2$.

The main step in the proof of \eqref{1.7} will be to show that
\begin{equation}\label{a}
\|e^{-itP}f\|_{L^q_tL^\infty_{|x|}L^2_\theta({\mathbb R}\times {\mathbb R}^2)}
\le C_q \|f\|_{L^2({\mathbb R}^2)}, \;\;
\text{if }q>2, \text{ and }
\Hat f(\xi)=0 \text{ if } |\xi|\notin [\tfrac 12,1].
\end{equation}
Since Hardy-Littlewood-Sobolev estimates give
$\dot H^{1-\frac 2r}({\mathbb R}^2)\subset
L^r({\mathbb R}^2)$, $2\le r<\infty$, we also clearly have
\begin{equation}\label{energy}
\bigl\|\, e^{-itP}f\, \bigr\|_{L^\infty_tL^r_{|x|}L^2_\theta}\le C_r\|f\|_{\dot H^{1-\frac 2r}},
\end{equation}
since $e^{-itP}$ is a unitary operator on $\dot H^\gamma$.  The estimates
\eqref{a} and \eqref{energy}
say that we have the estimates described in Figure \ref{figu1} that
respectively correspond to the
(open) vertical
and (half open) horizontal segments.
By interpolation we conclude that,
if $q,r>2$ and $\tfrac 1q<\tfrac 12-\tfrac 1r$, then
$$\|e^{-itP}f\|_{L^q_tL^r_{|x|}L^2_\theta}\le C_{q,r}\|f\|_{L^2({\mathbb R}^2)}, \quad
\text{if } \, \Hat f(\xi)=0, \, \, |\xi|\notin [\tfrac 12,1].$$
By scaling and Littlewood-Paley theory, we obtain from this that if we
remove the support assumptions on the Fourier transform,
then for $q$ and $r$ as above, and $(q,r)\neq(\infty,\infty)$,
\begin{equation}\label{b}
\|e^{-itP}g\|_{L^q_tL^r_{|x|}L^2_\theta({\mathbb R}\times {\mathbb R}^2)}
\le C_{q,r}\|g\|_{\dot H^{1-\frac 2r-\frac 1q}({\mathbb R}^2)},
\end{equation}
which is the inequality in Proposition~\ref{prop1.2}.

Let us turn to the proof of \eqref{a}.  By the support assumptions for
$\Hat f$ we have that
\begin{equation}\label{c}
\|f\|^2_{L^2({\mathbb R}^2)}\approx \int_0^\infty \int_0^{2\pi}|\Hat f(\rho(\cos\omega,\sin\omega))|^2
d\omega d\rho.
\end{equation}
We expand the angular part of $\Hat f$ using Fourier series and find that if
$\xi = \rho(\cos\omega,\sin\omega)$, then there are coefficients
$c_k(\rho)$ which vanish
for $\rho\notin [\tfrac 12,1]$, so that
$$\Hat f(\xi)=\sum_k c_k(\rho)\,e^{ik\omega}\,.$$
By \eqref{c} and Plancherel's theorem for $S^1$ and ${\mathbb R}$, we have
\begin{equation}\label{d}
\|f\|^2_{L^2({\mathbb R}^2)}
\approx \sum_k\int_{\mathbb R}|c_k(\rho)|^2\, d\rho \approx \sum_k\int_{{\mathbb R}}
|\Hat c_k(s)|^2 \, ds,
\end{equation}
where $\Hat c_k(s)$, $s\in {\mathbb R}$,
denotes the one-dimensional Fourier transform of $c_k(\rho)$.
Recall  that (see Stein and Weiss \cite{SW} p. 137)
\begin{equation}\label{e}
f(r(\cos\omega,\sin\omega))=(2\pi)^{-1}\sum_k
\Bigl(\, i^k \int_0^\infty J_{k}(r\rho)\,c_k(\rho)\,\rho \, d\rho\,
\Bigr) e^{ik\omega},
\end{equation}
where $J_k$, $k\in {\mathbb Z}$, is the $k$-th Bessel function, defined by
\begin{equation}\label{f}
J_k(y)=\frac{(-i)^k}{2\pi}\int_0^{2\pi} e^{iy\cos\theta -ik\theta}\, d\theta.
\end{equation}
By \eqref{e} and the support properties of the $c_k$,
if we fix
$\beta\in C^\infty_0({\mathbb R})$ satisfying $\beta(\tau)=1$ for
$\tau\in[\frac12,1]$ and $\beta(\tau)=0$ for $\tau\notin [\frac14,2]$,
then with $\alpha(\rho)=\rho\,\beta(\rho)
\in {\mathcal S}({\mathbb R})$, we have
\begin{align*}
\bigl(e^{-itP}f\bigr)&(r(\cos\omega,\sin\omega))
\\
&=
(2\pi)^{-1}\sum_k\Bigl(\, i^k \int_0^\infty J_{k}(r\rho)\,e^{-it\rho}\,c_k(\rho)\,
\beta(\rho)\,\rho\, d\rho\, \Bigr)e^{ik\omega}
\\
&=(2\pi)^{-2}\sum_k\Bigl(\, i^k\int_0^\infty \int_{-\infty}^\infty J_{k}(r\rho)\,
e^{i\rho(s-t)}\,\Hat c_k(s)\,\alpha(\rho)\, ds\, d\rho\, \Bigr)e^{ik\omega}
\\
&=(2\pi)^{-3}\sum_k\Bigl(\,
\int_0^\infty\int_{-\infty}^\infty \int_0^{2\pi}
e^{i\rho r\cos\theta}e^{-ik\theta}e^{i\rho(s-t)}\,\Hat c_k(s)\,
\alpha(\rho)\, d\theta\, ds \,d\rho\, \Bigr)e^{ik\omega}
\\
&=(2\pi)^{-3}\sum_k\Bigl(\,  \int_{-\infty}^\infty \int_0^{2\pi}
e^{-ik\theta}\Hat \alpha\bigl((t-s)-r\cos\theta \bigr)\, \Hat c_k(s) \,
d\theta \, ds \, \Bigr)e^{ik\omega}.
\end{align*}
As a result, we have that for any $r\ge0$,
\begin{multline}\label{g}
\int_0^{2\pi}\Bigl| \, \bigl(e^{-itP}f\bigr)
(r(\cos\omega,\sin\omega))\, \Bigr|^2 \, d\omega
\\
=(2\pi)^{-5} \sum_k\,\Bigl|\, \int_{-\infty}^\infty \int_0^{2\pi} e^{-ik\theta}\,
\Hat \alpha\bigl((t-s)-r\cos\theta\bigr)\, \Hat c_k(s) \,d\theta\,ds\, \Bigr|^2.
\end{multline}

To estimate the right side we shall use the following.

\begin{lemma}\label{mainest}
  Let $\alpha\in {\mathcal S}({\mathbb R})$ and $N\in {\mathbb N}$ be fixed.  Then there is a uniform constant $C$, which is independent of $m\in {\mathbb R}$ and $r\ge0$, so that the following inequalities hold.  First,
\begin{equation}\label{1}
\int_0^{2\pi} |\alpha(m-r\cos\theta)|\, d\theta \le C \langle \, m\, \rangle^{-N}, \quad
\text{if } \, \, 0\le r\le 1, \, \, \text{or }\, \, |m|\ge 2r.
\end{equation}
If $r>1$ and $|m|\le 2r$ then
\begin{equation}\label{2}
\int_0^{2\pi}|\alpha(m-r\cos\theta)|\, d\theta
\le C\Bigl(\, r^{-1}+r^{-\frac12}\langle \, r-|m|\, \rangle^{-\frac12}\, \Bigr).
\end{equation}
Consequently, if $\delta>0$, there is a constant $A_\delta$, which is independent of $t\in {\mathbb R}$ and $r\ge 0$ so that
\begin{equation}\label{3}
\int_{-\infty}^\infty\left( \, \int_0^{2\pi} \langle \, t-s \, \rangle^{\frac12-\delta}\,
|\alpha((t-s)-r\cos\theta)|\, d\theta \, \right)^2 \, ds \le A_\delta.
\end{equation}
\end{lemma}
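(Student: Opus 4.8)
The plan is to estimate the angular integral $\int_0^{2\pi}|\alpha(m-r\cos\theta)|\,d\theta$ pointwise in $m$ and $r$, splitting into the three regimes indicated: (i) $0\le r\le 1$, (ii) $r>1$ with $|m|\ge 2r$, and (iii) $r>1$ with $|m|\le 2r$. The first two regimes give \eqref{1} and the third gives \eqref{2}; then \eqref{3} follows by plugging these bounds into the $s$-integral and checking convergence.

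For \eqref{1}, when $0\le r\le1$ the argument $m-r\cos\theta$ stays within distance $1$ of $m$, so $\langle m-r\cos\theta\rangle\approx\langle m\rangle$; since $\alpha\in\mathcal S$ satisfies $|\alpha(y)|\le C_N\langle y\rangle^{-N}$ for every $N$, integrating in $\theta$ over a set of measure $2\pi$ gives $C\langle m\rangle^{-N}$. When $r>1$ but $|m|\ge 2r$, we have $|m-r\cos\theta|\ge |m|-r\ge |m|/2$, hence $\langle m-r\cos\theta\rangle\gtrsim\langle m\rangle$, and the same Schwartz decay argument applies. For \eqref{2}, the key point is the standard stationary-phase-type change of variables: substituting $u=r\cos\theta$, the measure $d\theta$ becomes $du/(r|\sin\theta|)=du/\sqrt{r^2-u^2}$, and since $\alpha$ is essentially supported (up to rapidly decaying tails) where $|m-u|\lesssim 1$, i.e. $u\approx m$, the bound reduces to estimating $\int_{|u-m|\lesssim 1}(r^2-u^2)^{-1/2}\,du$ together with the rapidly decaying contribution from $|u-m|\gg 1$. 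Writing $r^2-u^2=(r-u)(r+u)$ with $r+u\approx r$ in the relevant range $|m|\le 2r$, one gets $(r^2-u^2)^{-1/2}\approx r^{-1/2}(r-|u|)^{-1/2}$, and integrating over $|u-|m||\lesssim 1$ yields the two terms $r^{-1}$ (from the region where $r-|u|\gtrsim 1$, crudely bounding $(r-|u|)^{-1/2}\le r^{-1/2}$ fails — rather this term comes from the $|u|$ near $r$ endpoint or from the crude bound $(r-u)^{-1/2}$ integrated near $u=r$) and $r^{-1/2}\langle r-|m|\rangle^{-1/2}$ (the generic case $r-|m|\gtrsim 1$). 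The Schwartz tails contribute $O(r^{-1}\langle\,\cdot\,\rangle^{-N})$ and are absorbed. I expect the careful bookkeeping of these two regimes — distinguishing $|m|$ close to $r$ (square-root singularity in the Jacobian, giving the $r^{-1/2}\langle r-|m|\rangle^{-1/2}$ term) from $|m|$ bounded away from $r$ (smooth Jacobian of size $\approx r^{-1}$) — to be the main technical obstacle, since one must handle the interplay of the Schwartz decay of $\alpha$ with the integrable singularity of $(r^2-u^2)^{-1/2}$.

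Finally, for \eqref{3}, set $m=t-s$ and substitute; it suffices to show $\int_{-\infty}^\infty \langle m\rangle^{1-2\delta}\bigl(\int_0^{2\pi}|\alpha(m-r\cos\theta)|\,d\theta\bigr)^2\,dm\le A_\delta$ uniformly in $r\ge0$. For $0\le r\le1$ or $|m|\ge 2r$, inequality \eqref{1} with $N$ large makes the integrand $\lesssim\langle m\rangle^{1-2\delta-2N}$, which integrates to a constant. For $r>1$ and $|m|\le 2r$, inequality \eqref{2} gives an integrand bounded by $\langle m\rangle^{1-2\delta}\bigl(r^{-2}+r^{-1}\langle r-|m|\rangle^{-1}\bigr)$; since $|m|\le 2r$ we have $\langle m\rangle^{1-2\delta}\lesssim r^{1-2\delta}$, so the first piece contributes $\lesssim r^{1-2\delta}\cdot r^{-2}\cdot r = r^{-2\delta}\lesssim 1$, while for the second piece we split: where $|m|\le r/2$ we bound $\langle r-|m|\rangle^{-1}\lesssim r^{-1}$ and $\langle m\rangle^{1-2\delta}$ integrates against $dm$ over $|m|\le r$ to $O(r^{2-2\delta})$, giving $r^{2-2\delta}\cdot r^{-1}\cdot r^{-1}=r^{-2\delta}\lesssim1$; and where $r/2\le|m|\le 2r$ we have $\langle m\rangle^{1-2\delta}\approx r^{1-2\delta}$, and $\int_{r/2\le|m|\le 2r}\langle r-|m|\rangle^{-1}\,dm\lesssim\log r$, yielding $r^{-1}\cdot r^{1-2\delta}\cdot\log r = r^{-2\delta}\log r\lesssim 1$. (Here $\delta>0$ is exactly what is needed to make these powers of $r$ negative and kill the logarithm.) Combining the regimes gives the uniform bound $A_\delta$.
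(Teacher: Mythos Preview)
Your treatment of \eqref{1} is the same as the paper's (which dismisses it as ``trivial since $\alpha\in\mathcal S$''), and your verification of \eqref{3} is correct and considerably more detailed than the paper's, which simply asserts that \eqref{1} and \eqref{2} ``clearly imply'' \eqref{3}.

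For \eqref{2} your approach genuinely differs from the paper's. You perform one global substitution $u=r\cos\theta$, obtaining the Jacobian $(r^2-u^2)^{-1/2}$, and then must untangle the interaction between its endpoint singularities at $u=\pm r$ and the Schwartz localization of $\alpha$ near $u=m$. This can certainly be made to work, but your exposition is muddled about where the $r^{-1}$ term originates; in fact, since $|m|\le 2r$ forces $\langle r-|m|\rangle\lesssim r$, one has $r^{-1}\lesssim r^{-1/2}\langle r-|m|\rangle^{-1/2}$, so the $r^{-1}$ term is redundant---which is why you could not isolate it cleanly. The paper instead splits the $\theta$-integral \emph{before} changing variables: on $[\pi/4,\,\pi-\pi/4]$ one has $|\sin\theta|\gtrsim 1$, so the substitution $u=r\cos\theta$ has Jacobian $\approx r^{-1}$ and immediately yields the $r^{-1}$ bound; on $[0,\pi/4]$ (and by symmetry $[\pi-\pi/4,\pi]$) the paper substitutes $u=1-\cos\theta$, so $d\theta\approx du/\sqrt u$, and the integral becomes $r^{-1/2}\int_0^\infty|\alpha((m-r)+w)|\,w^{-1/2}\,dw\lesssim r^{-1/2}\langle r-|m|\rangle^{-1/2}$. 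This decomposition isolates the singular and nonsingular Jacobian regions from the outset, avoiding precisely the bookkeeping you flagged as ``the main technical obstacle.'' Your route is not wrong, but the paper's is cleaner and explains the two-term structure of the bound.
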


If we apply \eqref{3} and \eqref{g} along with the Schwarz inequality, we
conclude that if $f$ is as in \eqref{a}, then for $\delta>0$
$$\Bigl\| \, e^{-itP}f\, \Bigr\|_{L^\infty_{|x|}L^2_\theta}^2
\le B_\delta \sum_k\int_{-\infty}^\infty \, \bigl|\, \langle\, t-s\,
\rangle^{-\hf+\delta}
\Hat c_k(s)\, \bigr|^2 \, ds,$$
which, by Minkowski's inequality and \eqref{d}, in turn yields \eqref{a}.

\noindent {\bf Proof of Lemma~\ref{mainest}:}
We first realize that inequalities \eqref{1} and \eqref{2} clearly imply \eqref{3}.  Also, \eqref{1} is trivial since $\alpha\in {\mathcal S}$.  Therefore, we just need to to prove \eqref{2}.  To do so, it suffices to show that
\begin{equation}\label{4}
\int_0^{\pi/4}|\alpha(m-r\cos\theta)|\, d\theta +\int_{\pi-\pi/4}^\pi |\alpha(m-r\cos\theta)|\, d\theta
\le C r^{-\frac12}\langle \, r-|m|\, \rangle^{-\frac12},
\end{equation}
and also
\begin{equation}\label{5}
\int_{\pi/4}^{\pi-\pi/4}|\alpha(m-r\cos\theta)| \, d\theta \le Cr^{-1}.
\end{equation}

In order to prove \eqref{4}, it suffices to prove that the first integral is controlled by the right side.  For if we apply this estimate to the function $\alpha(-s)$, we then see that the second integral satisfies the same bounds.  We can estimate the first integral if we make the substitution
$u=1-\cos\theta$, in which case, we see that it equals
\begin{align*}
\int_0^{1-1/\sqrt2}|\alpha((m-r)+ru)|\, \frac{du}{\sqrt{2u-u^2}}
&\le \int_0^{1-1/\sqrt2}|\alpha((m-r)+ru)|\, \frac{du}{\sqrt u}
\\
&\le Cr^{-\frac12}\int_0^\infty |\alpha((m-r)+u)|\, \frac{du}{\sqrt u}
\\
&\le C'r^{-\frac12}\, \langle \, r-|m| \, \rangle^{-\frac12},
\end{align*}
as desired, which completes the proof of \eqref{4}.

To prove \eqref{5} we just make the change of variables $u=r\cos\theta$ and note that $|du/d\theta| \approx r$ on the region of integration,
which leads to the inequality as $\alpha\in {\mathcal S}$.  \qed

\medskip

We conclude this section by showing how Proposition~\ref{prop1.2}
implies estimates that can be used to prove
Glassey's~\cite{G} existence theorem for $\square u =|u|^p$ when $n=2$.
Specifically, if $u$
solves the wave equation for ${\mathbb R}\times {\mathbb R}^2$,
\begin{equation}\label{14}
\begin{cases}
\square u = F
\\
u|_{t=0}=f, \quad \partial_tu|_{t=0}=g,
\end{cases}
\end{equation}
then
\begin{equation}
\label{15}
\|u\|_{L^q_tL^r_{|x|}L^2_\theta}+\|u\|_{L^\infty_t\dot H^\gamma}\lesssim
\|f\|_{\dot H^\gamma}+\|g\|_{\dot H^{\gamma-1}}+\|F\|_{L_t^{\tilde q'}L^{\tilde r'}_{|x|}L^2_\theta},
\end{equation}
assuming that $q,r, \tilde q, \tilde r>2$ with
$(q,r), (\tilde q,\tilde r) \ne (\infty, \infty)\,,$
$\frac 1q<\hf-\frac 1r$, $\frac 1{\tilde q}< \hf - \frac 1{\tilde r}$, and
\begin{equation}\label{16}
\gamma=1-\frac 2r-\frac 1q\,, \quad \text{and } \,
1-\gamma = 1-\frac 2{\tilde r}-\frac 1{\tilde q}\,.
\end{equation}
In \eqref{15}, $\tilde q'$ and $\tilde r'$ denote the exponents which are
conjugate to
$\tilde q$ and $\tilde r$, respectively, and also, here and in what follows,
the space-time norms are taken over ${\mathbb R}_+\times {\mathbb R}^2$.
 Clearly, \eqref{15} follows from \eqref{1.7} and
energy estimates if the forcing term, $F$, in \eqref{14} vanishes.  Since we
are
assuming \eqref{16} and since $\tilde q'<q$, the estimates for the
inhomogeneous wave
equation follow from an application of the Christ-Kiselev lemma \cite{ChKi01}
(cf. \cite{So08},
pp. 136--141).

If $\{Z\}$ are the operators in \eqref{1.4}, then since they commute with
$\square$, \eqref{15}
implies that
\begin{multline}\label{17}
\sum_{|\alpha|\le1}
\Bigl(\,
\|Z^\alpha u\|_{L^q_tL^r_{|x|}L^2_\theta}+\|Z^\alpha u\|_{L^\infty_t \dot H^\gamma}\, \Bigr)
\\
\lesssim
\sum_{|\alpha|\le1}
\Bigl(\, \|Z^\alpha f\|_{\dot H^\gamma}+\|Z^\alpha g\|_{\dot H^{\gamma-1}}
+\|Z^\alpha F\|_{L^{\tilde q'}_tL^{\tilde r'}_{|x|}L^2_\theta}\, \Bigr),
\end{multline}
with $q,r,\tilde q', \tilde r'$ and $\gamma$ as above.  Let us now present
the simple argument
showing that this estimate implies that there are global solutions of the
equation
\begin{equation}\label{18}
\begin{cases}
\square u(t,x) = F_p(u(t,x)), \quad (t,x)\in {\mathbb R}_+\times {\mathbb R}^2
\\
u|_{t=0}=f, \quad \partial_t u|_{t=0}=g,
\end{cases}
\end{equation}
if $F_p$ is as in \eqref{1.2}, with $p$ as in \eqref{1.3}, assuming
that (when $p_c<p<5$) the initial data satisfies
\begin{equation}\label{19}
\sum_{|\alpha|\le 1}\Bigl(\, \|Z^\alpha f\|_{\dot H^{\gamma_p}}+\|Z^\alpha g\|_{\dot
H^{\gamma_p-1}}\, \Bigr)<\varepsilon, \quad \gamma_p=1-\tfrac2{p-1},
\end{equation}
with $\varepsilon=\varepsilon(p)$ sufficiently
small.

We first consider the subconformal range where $\frac{3+\sqrt{17}}2=p_c<p<5$.
This range easily lends itself to the special case of \eqref{17},
which says that, for such $p$,
\begin{multline}\label{20}
\sum_{|\alpha|\le 1}\Bigl(\,
\|Z^\alpha u\|_{L^{\frac{(p-1)p}{2}}_tL^p_{|x|}L^2_\theta}+\|Z^\alpha u\|_{L^\infty_t\dot
H^{\gamma_p}}\Bigr)
\\
\lesssim \sum_{|\alpha|\le 1}\Bigl(\,
\|Z^\alpha f\|_{\dot H^{\gamma_p}}+\|Z^\alpha g\|_{\dot H^{\gamma_p-1}}
+\|Z^\alpha F\|_{L^{\frac{p-1}{2}}_t L^1_{|x|}L^2_\theta}\,\Bigr)\,.
\end{multline}
The temporary assumption that $p<5$ is needed to ensure that $(p-1)/2<2$,
and, therefore,
$[(p-1)/2]'>2$, which is the first part of the assumptions for \eqref{15}.
The more serious
assumption that $p>p_c$, which is \eqref{1.3},
is equivalent to the second part of
\eqref{1.6} for the exponents on the left side of \eqref{20}. That is,
for $p>0$,
$$\frac2{p(p-1)}<\frac12-\frac1p \, \iff \, p>p_c.$$

Using \eqref{20}, we shall show that we can solve \eqref{18} by an
iteration argument
for $p_c<p<5$, provided that $\varepsilon>0$ in \eqref{19} is small.
To be more specific,
we shall let $u_0$ solve the Cauchy problem \eqref{14} with $F\equiv 0$.
We then iteratively
define $u_k$, $k\ge1$, by solving
\begin{equation}
\begin{cases}
\square u_k (t,x)=F_p(u_{k-1}(t,x)), \quad
(t,x)\in {\mathbb R}_+\times {\mathbb R}^2
\\
u_k|_{t=0}=f ,\quad \partial_t u_k|_{t=0}=g\,.
\end{cases}
\end{equation}
Our aim is to show that if $\varepsilon>0$ in \eqref{19} is small enough, then
$$
M_k=\sum_{|\alpha|\le 1} \Bigl(\,
\|Z^\alpha u_k\|_{L^{\frac{(p-1)p}2}_tL^p_{|x|}L^2_\theta}+
\|Z^\alpha u_k\|_{L^\infty_t\dot H^{\gamma_p}}\, \Bigr)
$$
must also be small.

For $k=0$, it follows from \eqref{20} that $M_0\le C_0\,\varepsilon$, with
$C_0$ a fixed constant.  Clearly, \eqref{20} also yields
that for $k=1,2,3,\cdots$
$$
M_k\le C_0\,\varepsilon +
C_0\sum_{|\alpha|\le 1}\|Z^\alpha F_p(u_{k-1})\|_{L^{\frac{p-1}2}_t
L^1_{|x|}L^2_\theta}\,.
$$
To control the last term, we note that our assumption \eqref{1.2} on $F_p$
implies that
$$
\sum_{|\alpha|\le 1}|Z^\alpha F_p(v)| \lesssim
|v|^{p-1}\sum_{|\alpha|\le 1}|Z^\alpha v|,
\quad \text{if } \, |v|\le 1\,.
$$
Since $\partial_\theta=x_1\partial_2-x_2\partial_1\in \{Z\}$, we have
$$
\|v(|x|\, \cdot \, )\|_{L^\infty_\theta}\lesssim
\sum_{|\alpha|\le1}\|Z^\alpha v(|x|\, \cdot \, )\|_{L^2_\theta},
$$
and since $0<\gamma_p<1$ and $\partial_j\in \{Z\}$, $j=1,2$,
Sobolev estimates imply that
$$
\|v\|_{L^\infty({\mathbb R}^2)}\lesssim
\sum_{|\alpha|\le 1}\|Z^\alpha v\|_{\dot H^{\gamma_p}({\mathbb R}^2)}
$$
so that \eqref{1.2} applies in our case.
Combining the above inequalities gives
$$
M_k\le C_0\,\varepsilon +C_1 C_0 M^p_{k-1}\,,
$$
for some uniform constant $C_1$.  Since $M_0\le C_0\,\varepsilon$,
we deduce from this that, if $\varepsilon>0$ is sufficiently small, then
\begin{equation}\label{21}
M_k\le 2 C_0\,\varepsilon, \quad k=1,2,3,\dots
\end{equation}

To finish the proof of the existence results for $p_c<p<5$,
it suffices to show that
$$
A_k=\|u_k-u_{k-1}\|_{L^{\frac{(p-1)p}2}_tL^p_{|x|}L^2_\theta}
$$
tends geometrically to zero as $k\to \infty$.  Since
$|F_p(w)-F_p(v)|\lesssim
|v-w|\cdot (|v|^{p-1}+|w|^{p-1})$ when $|v|, \, |w|\le 1$,
the proof of \eqref{21} can
be adapted to show that, for small $\varepsilon>0$, there is a uniform
constant $C$ so that
$$A_k\le CA_{k-1}\bigl(M_{k-1}+M_{k-2}\bigr)^{p-1},$$
which, by \eqref{21}, implies that $A_k\le \tfrac12 A_{k-1}$ for small
$\varepsilon>0$.  Since $A_1$
is finite, the claim follows, which finishes the proof of the existence results
for $p_c<p<5$.

As we noted above, we cannot directly get the existence results from
\eqref{20} if
$p\ge 5$.  However, since the assumptions \eqref{1.2} on $F_p$ become weaker
with increasing $p$, the above argument yields existence
results for this case as well.

\newsection{The Strauss conjecture for nontrapping obstacles in 2-dimensions}

The goal of this section is to show that we can solve the semilinear
Dirichlet-wave equation
\eqref{1.1} for small data when $\Kob\subset {\mathbb R}^2$ is a
nontrapping obstacle and,
as in \eqref{1.3}, $p>p_c=\frac{3+\sqrt{17}}2$.  The main step will be to
find a suitable
variant of the Minkowski space estimate \eqref{1.7} which is valid for
solutions of the linear
Dirichlet-wave equation
\begin{equation}\label{3.1}
\begin{cases}
\square u(t,x)=F(t,x), \quad (t,x)\in {\mathbb R}_+\times \Omega
\\
u(t,x)=0, \quad (t,x)\in {\mathbb R}_+\times \partial\Omega
\\
u|_{t=0}=f, \, \, \partial_t u|_{t=0}=g,
\end{cases}
\end{equation}
where, as before, $\Omega={\mathbb R}^2\backslash \Kob$.  As
previously noted, we are in luck because the crucial estimates for
\eqref{1.7} involve Sobolev regularity of $\gamma=\hf$, which
is the regularity necessary for $n=2$ to use the techniques of
\cite{HMSSZ}, \cite{B}, \cite{M} and \cite{SmSo00}, to show that
local in time Strichartz estimates for $\Omega$, coupled with
global in time estimates for ${\mathbb R}^2$, imply global in time estimates
for $\Omega$.  After we obtain these estimates for $\gamma=\hf$, we
shall be able to obtain a family of estimates corresponding to other
$\gamma$ by interpolating with energy estimates.  The range of
exponents will be slightly smaller than in the previous
section, in that we shall not be able to obtain indices on
the open vertical line segment in Figure \ref{figu1} connecting
$(\frac 1r,\frac 1q)=(0,0)$ and $(0,\hf)$ (see Figure \ref{figue2} below).  Nonetheless,
as we shall see, the range that we can obtain is sufficient for
proving Theorem~\ref{thm1.1}.

As in \cite{HMSSZ}, due to technical difficulties in using the rotational
vector fields near $\partial\Omega$
(here $\partial_\theta = x_1\partial_2-x_2\partial_1$),
we shall modify the
Lebesgue spaces near $\partial\Omega$ from those in \eqref{15}.
Specifically,
given $0\le \gamma<1$, we define
\begin{equation}\label{3.2}
\|h\|_{X_{r,\gamma}}= \|h\|_{L^{s_\gamma}(|x|<3R)}+\|h\|_{L^r_{|x|}L^2_\theta(|x|>2R)},
\quad \text{with } \, \gamma=1-\frac 2{s_\gamma}\,.
\end{equation}
We fix $R\ge 1$ large enough so that $\Kob \subset \{|x|<R\}$.
When working with functions on $\R^2$,
the norms on the right side of \eqref{3.2} are
taken over $x\in {\mathbb R}^2$ with $|x|<3R$ and $|x|>2R$ for the
first and second terms, respectively.  For $\Omega$,
we define the norm in the obvious way
by extending $h$ to be equal to $0$ inside $\Kob$.

Note that $s_\gamma$ in \eqref{3.2} is chosen so that
$\dot H^\gamma({\mathbb R}^2)
\subset L^{s_\gamma}({\mathbb R}^2)$ and
$\dot H^\gamma(\Omega)\subset L^{s_\gamma}(\Omega)$, by Sobolev embedding.
We conclude by Lemma 2.2 of \cite{SmSo00} that
$$
\|u\|_{L^2_tL^{s_\gamma}_x({\mathbb R}_+ \times {\mathbb R}^2\,:\,|x|<3R)} \lesssim
\|f\|_{\dot H^\gamma({\mathbb R}^2)}+\|g\|_{\dot H^{\gamma-1}({\mathbb R}^2)}\, , \quad
0<\gamma \le \hf \, .
$$
Interpolating with energy conservation lets us conclude the same bound with
$2$ replaced by any $q\in[2,\infty]$.
By this and \eqref{15}, we conclude for the Minkowski
space case that, if $u$ solves \eqref{14} with forcing term $F\equiv 0$, and
$0< \gamma \le \frac 12$, then
\begin{multline}
\label{3.3}
\|u\|_{L^q_t X_{r,\gamma}({\mathbb R}_+ \times {\mathbb R}^2)} +
\|u\|_{L^\infty_t \dot H^\gamma({\mathbb R}_+\times {\mathbb R}^2)}
+ \|\partial_t u\|_{L^\infty_t \dot H^{\gamma-1}({\mathbb R}_+\times {\mathbb R}^2)}
\\
\lesssim \|f\|_{\dot H^\gamma({\mathbb R}^2)}+\|g\|_{\dot H^{\gamma-1}({\mathbb R}^2)},
\rule{0pt}{11pt}
\end{multline}
assuming that $q$, $r$ and $\gamma$ are as in \eqref{15}.

Using this estimate, the finite
propagation speed for $\square$, and the aforementioned Sobolev inequalities,
we see that
we also have a local in time variant of this estimate for $\Omega$.
Precisely, if $u$ solves the Dirichlet-wave equation \eqref{3.1}
with forcing term $F\equiv 0$, then
\begin{multline}
\label{3.4}
\|u\|_{L^q_t X_{r,\gamma}([0,1]\times \Omega)} + \|u\|_{L^\infty_t \dot H^\gamma([0,1]\times \Omega)}
+ \|\partial_t u\|_{L^\infty_t \dot H^{\gamma-1}([0,1]\times \Omega)}
\\
\lesssim \|f\|_{\dot H^\gamma(\Omega)}+\|g\|_{\dot H^{\gamma-1}(\Omega)},
\rule{0pt}{11pt}
\end{multline}
with the same assumptions on $q$, $r$ and $\gamma$.

We shall be able to use \eqref{3.3} and \eqref{3.4} to prove global variants
of some of the
estimates in \eqref{3.4} due to the fact that we have local energy decay
estimates for
the Dirichlet-wave equation \eqref{3.1}.  Specifically, given fixed $R_0>0$
we have
\begin{multline}\label{3.5}
\int_0^\infty \, \|u(t,\cd)\|^2_{H^1(|x|<R_0)} +
\|\partial_t u(t,\cd)\|_{L^2(|x|<R_0)}^2 \; dt
\\
\lesssim \|f\|^2_{H^1} + \|g\|_{L^2}^2 + \int_0^\infty \|F(s,\cd)\|^2_{L^2}\, ds\,,
\end{multline}
assuming that $\Kob$ is nonempty and nontrapping, and that
$f(x)$, $g(x)$ and $F(t,x)$
all vanish when $|x|>R_0$.  This was called ``Hypothesis 1.1" in \cite{HMSSZ}.
As noted there,
it follows from results of Vainberg \cite{V}, but another proof can be found
in Burq \cite{B}.  Also,
Ralston showed in \cite{R} that this estimate need not hold for
Neumann boundary conditions
in 2-dimensions, which explains why we are only treating the Dirichlet
case in this paper.

Since we have \eqref{3.3}--\eqref{3.5}, we can invoke
Theorem~1.4 from \cite{HMSSZ} to
conclude that we have global versions of \eqref{3.4} in the special
case where $\gamma=\hf$.
Precisely, if $u$ solves \eqref{3.1} with $F\equiv 0$, then
\begin{multline}\label{3.6}
\|u\|_{L^q_tX_{r, \hf}({\mathbb R}_+\times \Omega)} +
\|u\|_{L^\infty_t \dot H^{\hf}({\mathbb R}_+ \times \Omega)}
+ \|\partial_t u\|_{L^\infty_t \dot H^{-\hf}({\mathbb R}_+ \times \Omega)}
\\
\lesssim \|f\|_{\dot H^{\hf}(\Omega)}+ \|g\|_{\dot H^{-\hf}(\Omega)},
\end{multline}
assuming the following conditions on $q$ and $r$,
$$
q>2\,, \quad
\hf=1-\frac 2r-\frac 1q\,,
\quad \text{and} \quad
\frac 1q+\frac 1r<\hf\,.
$$

A limitation of Theorem~1.4 in \cite{HMSSZ}
(which seems difficult to overcome) is that for $n=2$
it applies only to the case of $\gamma=\hf$, whereas
for our existence proof we seek estimates with $0<\gamma<\hf$.
We get around this problem
by an interpolation argument.  Note that,
by Sobolev embedding and energy conservation,
if $0<\gamma<1$ and $s_\gamma$ is as in \eqref{3.2}, then
\begin{multline*}
\|u\|_{L^\infty_t L^{s_\gamma}_x({\mathbb R}_+\times \Omega)}
+ \|u\|_{L^\infty_t \dot H^\gamma ({\mathbb R}_+\times \Omega)}
+ \|\partial_t u\|_{L^\infty_t \dot H^{\gamma-1} ({\mathbb R}_+\times \Omega)}
\\
\lesssim \|f\|_{\dot H^\gamma(\Omega)}+\|g\|_{\dot H^{\gamma-1}(\Omega)}.
\end{multline*}
Since $s_\gamma\ge2$, it follows by H\"older's
inequality for $S^1$ that the $L^{s_\gamma}(\Omega)$ norm majorizes the
$X_{s_\gamma,\gamma}(\Omega)$ norm. Consequently, by the
preceding inequality we have that
\begin{multline}\label{3.7}
\|u\|_{L^\infty_t X_{s_\gamma,\gamma}({\mathbb R}_+\times \Omega)}
+\|u\|_{L^\infty_t \dot H^\gamma({\mathbb R}_+\times \Omega)}
+\|\partial_t u\|_{L^\infty_t \dot H^{\gamma-1}({\mathbb R}_+\times \Omega)}
\\
\lesssim \|f\|_{\dot H^\gamma(\Omega)}+\|g\|_{\dot H^{\gamma-1}(\Omega)},\quad
0<\gamma<1\,,
\end{multline}
if $u$ solves \eqref{3.1} with forcing term $F\equiv 0$.

\begin{figure*}[h]
\centering
\includegraphics[width=0.9\textwidth]{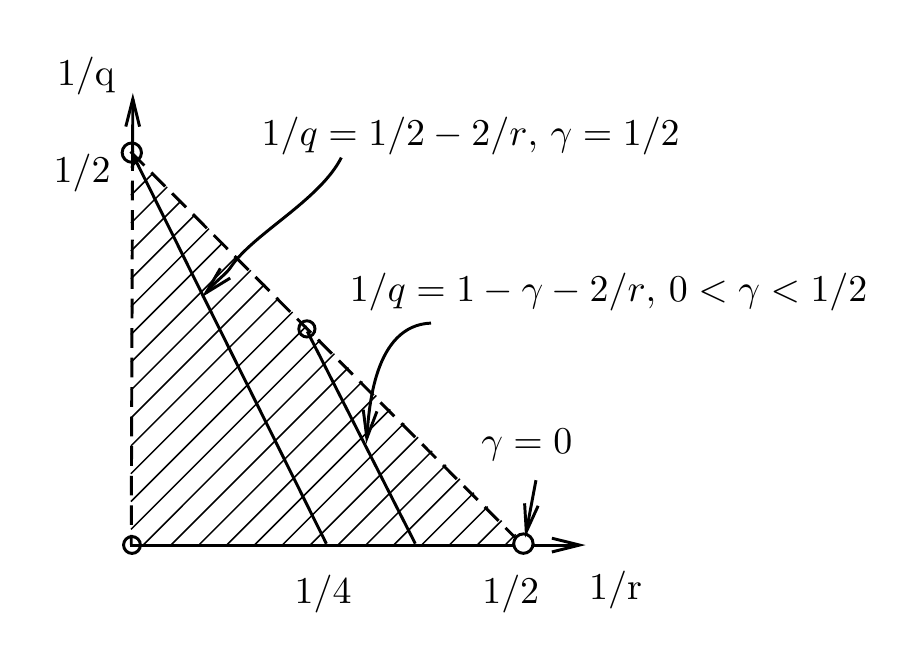}
\caption{Obstacle case exponents}\label{figue2}
\end{figure*}

In Figure \ref{figue2}, this corresponds to the exponents on the (open)
horizontal line segment corresponding to $\frac 1q=0$.
The global estimates \eqref{3.6} correspond to the
(half-open) segment where $\frac 1q=\hf-\frac 2r$ and $\gamma=\hf$
in this figure. Since
the convex hull of this line segment and the horizontal segment
is the shaded region in Figure \ref{figue2}, we conclude by interpolating
between \eqref{3.6} and \eqref{3.7} that,
for $u$ solving \eqref{3.1} with vanishing forcing term, we have
\begin{equation}\label{3.8}
\|u\|_{L^q_t X_{r,\gamma}({\mathbb R}_+\times \Omega)}
+\|u\|_{L^\infty_t \dot H^\gamma({\mathbb R}_+\times \Omega)}
+\|\partial_t u\|_{L^\infty_t \dot H^{\gamma-1}(\Omega)}
\lesssim \|f\|_{\dot H^\gamma(\Omega)}+\|g\|_{\dot H^{\gamma-1}(\Omega)},
\end{equation}
provided that
\begin{equation}\label{3.9}
q,r>2\,, \quad r<\infty\,, \quad
\gamma=1-\frac 2r-\frac 1q\,, \quad \text{and} \quad \frac 1q+\frac 1r<\hf\,.
\end{equation}
By the Christ-Kiselev lemma, if in addition $\tilde q$ and
$\tilde r$ satisfy the variant of \eqref{3.9} corresponding to $1-\gamma$,
\begin{equation}\label{3.10}
\tilde q, \tilde r>2\,, \quad \tilde r<\infty\,, \quad
1-\gamma = 1-\frac 2{\tilde r} - \frac 1{\tilde q}\,,
\quad \text{and} \quad
\frac 1{\tilde q} + \frac 1{\tilde r}<\hf\,,
\end{equation}
then if $u$ solves the linear Dirichlet-wave equation \eqref{3.1}
with forcing term $F$, we have
\begin{multline}\label{3.11}
\|u\|_{L^\infty_t \dot H^\gamma({\mathbb R}_+\times\Omega)}
+\|\partial_t u\|_{L^\infty_t \dot H^{\gamma-1}({\mathbb R}_+\times\Omega)}
+\|u\|_{L^q_t X_{r,\gamma}({\mathbb R}_+\times \Omega)}
\\
\lesssim \|f\|_{\dot H^\gamma(\Omega)}+\|g\|_{\dot H^{\gamma-1}(\Omega)}
+\|F\|_{L^{\tilde q'}_t X'_{\tilde r,1-\gamma}({\mathbb R}_+\times \Omega)}\,.
\end{multline}
Here, $X'_{\tilde r, 1-\gamma}$ denotes the norm which is dual to that of
$X_{\tilde r,1-\gamma}$. For the purposes of our existence proof
we do not need the exact expression for this dual norm, but use only
the following inequality. If $h=h_1+h_2$, and $h_1=0$ for $|x|>3R$, respectively
$h_2=0$ for $|x|<2R$, then
$$
\|h\|_{X'_{\tilde r,1-\gamma}}\le
\|h_1\|_{L^{s'_{1-\gamma}}(|x|<3R)}+
\|h_2\|_{L^{\tilde r'}_{|x|}L^2_\theta(|x|>2R)}\,,
$$
where $s'_{1-\gamma}$ and $\tilde r'$ denote the exponents which are conjugate to
$s_{1-\gamma}$ and $\tilde r$, respectively.
In particular, if $\phi$ and $\psi$ are smooth functions, with $\phi+\psi=1$,
and
$$
\phi(x)=\begin{cases}
1\,,\quad |x|<2R\,,\\ 0\,,\quad |x|>3R\,,
\end{cases}
$$
then
\begin{equation}\label{3.12}
\|h\|_{X'_{\tilde r,1-\gamma}}\le
\|\phi h\|_{L^{s'_{1-\gamma}}(|x|<3R)}+
\|\psi h\|_{L^{\tilde r'}_{|x|}L^2_\theta(|x|>2R)}\,.
\end{equation}

As with the proof of Glassey's theorem, we need a variant of \eqref{3.11}
involving the derivatives $\{\Gamma\}= \{\partial_t, Z\}$, where the $\{Z\}$ vector fields
are the ones in \eqref{1.4}. A problem arises
in establishing a version of \eqref{3.11} with derivatives, however,
in that the proof of such estimates on domains with boundary,
as in \cite{HMSSZ}, requires local energy decay estimates that
hold only for $\gamma=\frac 12$ in dimension $n=2$. Our approach will
be to establish estimates with derivatives for $\gamma=\frac 12$, and to interpolate
with \eqref{3.11} to obtain the desired estimates. For the case $\gamma=\hf$, we
have the following variant of Lemma 3.3 of \cite{HMSSZ}.

\begin{lemma}\label{lemma3.1}
Suppose that $(f,g,F)$ satisfy the Dirichlet compatibility conditions of order $k+\frac 12$.
Then, for even integers $k=0,2,4,\ldots$
\begin{multline}\label{3.18}
\sum_{|\alpha|\le k}\Bigl(\,
\|\Gamma^\alpha u\|_{L^\infty_t \dot H^{\hf}({\mathbb R}_+\times \Omega)}
+\|\Gamma^\alpha \partial_t u\|_{L^\infty_t \dot H^{-\hf}({\mathbb R}_+\times \Omega)}
+\|\Gamma^\alpha u\|_{L^q_t X_{r,\hf}({\mathbb R}_+\times\Omega)}
\, \Bigr)
\\
\lesssim
\sum_{|\alpha|\le k}\Bigl(\,
\|Z^\alpha \!f\|_{\dot H^{\hf}(\Omega)}+\|Z^\alpha \!g\|_{\dot H^{-\hf}(\Omega)}
+\|\Gamma^\alpha \! F\|_{L^{\tilde q'}_t X_{\tilde r,\hf}'({\mathbb R}_+\times\Omega)}
\, \Bigr),
\end{multline}
where $(q,r)$ and $(\tilde q,\tilde r)$ are as in \eqref{3.9} and \eqref{3.10}
for $\gamma=\hf$.
\end{lemma}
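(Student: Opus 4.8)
The plan is to adapt the argument used for Lemma~3.3 in \cite{HMSSZ}; the only genuinely new feature is that the mixed radial--angular spaces $X_{r,\hf}$ appear in place of the Lebesgue spaces used there, and the required changes are minor. The key ingredients are: the non--vector-field global estimate \eqref{3.11} at $\gamma=\hf$ (which serves as the case $k=0$); the equation $\square u=F$ together with elliptic regularity for the Dirichlet--Laplacian; the local energy decay \eqref{3.5}; and the Christ--Kiselev lemma \cite{ChKi01}. Fix cutoffs $\phi$ and $\psi=1-\phi$ as in \eqref{3.12}, with $\phi\equiv1$ on $\{|x|<2R\}$ and $\mathrm{supp}\,\phi\subset\{|x|<3R\}$, and for $|\alpha|\le k$ write $\Gamma^\alpha u=\psi\,\Gamma^\alpha u+\phi\,\Gamma^\alpha u$; the two pieces are estimated by different means.

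For the piece supported away from $\Kob$, work with $w=\psi u$, extended by zero across $\Kob$ so that it solves the boundaryless equation $\square w=\psi F-2\nabla\psi\cdot\nabla u-(\Delta\psi)u$ on $\R_+\times\R^2$, the last two terms being supported in the collar $\{2R<|x|<3R\}$. Since every field in $\{\Gamma\}=\{\partial_t\}\cup\{Z\}$ commutes with $\square$, the function $\Gamma^\alpha w$ solves the free wave equation with data $\Gamma^\alpha w|_{t=0}$ and forcing $\Gamma^\alpha(\square w)$, so one may apply the Minkowski estimate \eqref{3.3} together with the Christ--Kiselev argument exactly as in the derivation of \eqref{3.11}, which is legitimate at $\gamma=\hf$ under conditions \eqref{3.9}--\eqref{3.10}. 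The data terms reduce to the right side of \eqref{3.18}, since time derivatives in $\Gamma^\alpha$ are converted at $t=0$ into spatial derivatives and forcing terms via $\square u=F$ and the order-$(k+\hf)$ compatibility conditions, and $\|\,\cdot\,\|_{\dot H^{\pm\hf}(\R^2)}\lesssim\|\,\cdot\,\|_{\dot H^{\pm\hf}(\Omega)}$ because $\psi$ is supported off $\Kob$. The term $\Gamma^\alpha(\psi F)$ is controlled by $\|\Gamma^\alpha F\|_{L^{\tilde q'}_t X'_{\tilde r,\hf}}$ via \eqref{3.12}; the remaining collar terms are treated together with the interior piece.

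The interior piece $\phi\,\Gamma^\alpha u$ and the collar terms are all supported in $\{|x|<3R\}$, where $\Gamma^\alpha$ is a differential operator of order $|\alpha|$ with smooth coefficients, so everything reduces to estimating $u$ in Sobolev norms localized to a fixed ball. Near $\partial\Omega$ the translation and rotation fields do not respect the Dirichlet condition, so spatial regularity must be recovered from time regularity: using $-\Delta u=F-\partial_t^2u$, interior elliptic estimates, Dirichlet boundary elliptic estimates, and a cutoff to a slightly larger ball to absorb the artificial boundary $\{|x|=3R\}$, one trades spatial derivatives for time derivatives two at a time, reducing to bounds for $\partial_t^j u$ in norms localized in $x$. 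These follow by applying \eqref{3.11} to $v=\partial_t^j u$, which solves the Dirichlet--wave equation \eqref{3.1} with forcing $\partial_t^j F$ and data $(v,\partial_t v)|_{t=0}$ determined from $(f,g,F)$ by the equation; the compatibility conditions of order $k+\hf$ guarantee that these data lie in $\dot H^{\hf}(\Omega)\times\dot H^{-\hf}(\Omega)$, that $v$ satisfies the Dirichlet condition, and, via Dirichlet elliptic regularity, that their norms are dominated by the right side of \eqref{3.18}. The remaining collar terms are localized commutators and are of lower order; after integration by parts in the Duhamel formula they are absorbed using the local energy decay \eqref{3.5}, as in \cite{SmSo00} and \cite{HMSSZ}. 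This is the step in which the nontrapping assumption enters.

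The main difficulty is exactly this treatment near $\partial\Omega$: among the available vector fields only $\partial_t$ preserves the Dirichlet condition, so spatial smoothness there is accessible only indirectly through the equation, and the resulting error terms can be absorbed only by invoking the local energy decay \eqref{3.5} --- which, for $n=2$ with Dirichlet, nontrapping obstacles, holds at the regularity $\gamma=\hf$. This is why the statement is confined to $\gamma=\hf$, and, once the half-integer regularity is combined with the need for integer powers of the Dirichlet--Laplacian in the compatibility relations, why $k$ is taken to be even.
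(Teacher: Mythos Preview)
Your proposal is correct and follows essentially the same route as the paper: both reduce to the argument of Lemma~3.3 in \cite{HMSSZ}, splitting into an exterior piece handled by free-space estimates and commutation, and an interior piece handled by elliptic regularity (trading spatial for time derivatives two at a time via $-\Delta u=F-\partial_t^2u$) together with local energy decay to absorb the collar commutators. The paper is a bit more explicit than your sketch about two points: it isolates the matched pair of auxiliary estimates \eqref{l2control}--\eqref{l2decay} (the $L^2_tH^{-\hf}_D$ forcing bound for compactly supported $F$, and its dual $L^2_tH^{\hf}_D$ local smoothing bound), and it spells out the interpolation trick $\|\beta\partial_t^j u\|^2\le \|\beta\partial_t^{j+1}u\|\,\|\beta\partial_t^{j-1}u\|$ that reduces the localized Sobolev bounds to even powers of $\partial_t$, which is the precise place where the parity of $k$ enters.
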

\noindent
\textbf{Remark.}
The condition on the data is that $\beta f\in H^{k+\frac 12}_D$ where $\beta$ is a compactly
supported cutoff to a neighborhood of the boundary, and similarly $\beta g\in H^{k-\frac 12}_D$.
The condition on $F$ is that $\beta F\in L^{\tilde q}_t H^{k-\frac 12}_D$. These
imply that for all $t$, $(\beta u(t,\cdot\,),\beta\partial_t u(t,\cdot\,)\in H^{k+\frac 12}_D\times H^{k-\frac 12}_D$, which will be used in elliptic regularity arguments.
We will use the fact that, if $f$ satisfies the $H^\hf_D$ boundary conditions,
then $\|f\|_{\dot H^\hf_D(\Omega)}\approx \|f\|_{\dot H^\hf(\Omega)}$, where the latter
is the norm for the space of restrictions of elements in $\dot H^\hf(\R^2)$.

\begin{proof}
The proof of Lemma \ref{lemma3.1} follows closely the proof of Lemma 3.3 of \cite{HMSSZ},
and we focus here on the modifications necessary for the above estimate.
Two key estimates needed for the proof are the following. Assuming that $F\equiv 0$ on $|x|\ge 3R$,
then
\begin{equation}\label{l2control}
\|u\|_{L^q_t X_{r,\hf}}+\|u\|_{L^\infty_t \dot{H}^{\hf}_D}
+\|\partial_t u\|_{L^\infty_t \dot{H}^{-\hf}_D}
\lesssim
\|f\|_{\dot{H}^{\hf}_D}+\|g\|_{\dot{H}^{-\hf}_D}+\|F\|_{L^2_tH^{-\hf}_D}\,,
\end{equation}
and, with no support assumptions on the data, the following holds
\begin{equation}\label{l2decay}
\|u\|_{L^\infty_t \dot{H}^{\hf}_D}+\|\partial_t u\|_{L^\infty_t \dot{H}^{-\hf}_D}+
\|\beta u\|_{L^2_t H^{\hf}_D}
\lesssim
\|f\|_{\dot{H}^{\hf}_D}+\|g\|_{\dot{H}^{-\hf}_D}+\|F\|_{L^{\tilde q}_t X_{\tilde r,\hf}'}\,.
\end{equation}
Estimate \eqref{l2control} follows from the estimates (2.8) of \cite{HMSSZ} and \eqref{3.11}, and
\eqref{l2decay} follows from \eqref{l2control} by duality.

Using \eqref{l2control} and \eqref{l2decay},
and the case $k=0$ of \eqref{3.18},
the argument on page 2803-2805 of \cite{HMSSZ} reduces estimate \eqref{3.18} to
bounding the following quantity by the right hand side of \eqref{3.18}, where
$\beta$ is a compactly supported cutoff to a neighborhood of the obstacle,
\begin{equation}\label{ulocbounds}
\sum_{j\le k}\|\beta \partial_t^j u\|_{L^2_t H^{\hf+k-j}_D}+
\sum_{j\le k+1}\|\beta \partial_t^j u\|_{L^\infty_t H^{\hf+k-j}_D}\,.
\end{equation}

We first observe that the Cauchy data
for $\partial_t^j u$, $j\le k\,,$ belongs to
$H^\hf_D\times H^{-\hf}_D$; this is seen by using the equation to express
$\bigl(\partial_t^j u(0,\cdot\,),\partial_t^{j+1} u(0,\cdot\,)\bigr)$ in terms of powers of
$\Delta$ applied to $(f,g,F)$, and observing that, by Sobolev embedding,
\begin{multline}\label{Finftybounds}
\sum_{|\alpha|\le l}\|\partial_{t,x}^\alpha F\|_{L^\infty_t\dot{H}^{\hf}}+
\sum_{|\alpha|\le l+1}\|\partial_{t,x}^\alpha F\|_{L^\infty_t\dot{H}^{-\hf}}+
\sum_{|\alpha|\le l}\|\partial_{t,x}^\alpha F\|_{L^2_t\dot{H}^{\hf}}
\\
\lesssim
\sum_{|\alpha|\le l+1}\|\partial_{t,x}^\alpha F\|_{L^{\tilde q}_t \dot H^{\hf}}+
\sum_{|\alpha|\le l+2}\|\partial_{t,x}^\alpha F\|_{L^{\tilde q}_t \dot H^{-\hf}}
\lesssim
\sum_{|\alpha|\le l+2}\|\partial_{t,x}^\alpha F\|_{L^{\tilde q}_t X'_{\hf,r}}\,.
\end{multline}
By \eqref{l2decay}, we thus conclude that
the following is bounded by the right hand side of \eqref{3.18}
\begin{equation}\label{utbounds}
\sum_{j\le k}\Bigl(\,\|\beta \partial_t^j u\|_{L^2_t H^\hf_D}+
\|\beta \partial_t^j u\|_{L^\infty_t H^\hf_D}+\|\beta \partial_t^{j+1} u\|_{L^\infty_t H^{-\hf}_D}
\Bigr)\,.
\end{equation}
To bound \eqref{ulocbounds}, it therefore suffices to bound the
following quantity by the right hand side of \eqref{3.18},
\begin{equation}\label{uderivs}
\sum_{l\le k/2}\Bigl(\,
\|\beta\partial_t^{k-2l}\Delta^l u\|_{L^2_tH^\hf_D}+
\|\beta\partial_t^{k-2l}\Delta^l u\|_{L^\infty_tH^\hf_D}+
\|\beta\partial_t^{k+1-2l}\Delta^l u\|_{L^\infty_tH^{-\hf}_D}\Bigr)\,.
\end{equation}
Here, we are using that we need consider only even powers of
$\partial_t$ in the first term of \eqref{ulocbounds} since $k$ is even, and since
\begin{equation*}
\|\beta \partial_t^j u\|^2_{L^2_t H^{\hf+k-j}_D}\le
\|\beta \partial_t^{j+1} u\|_{L^2_t H^{\hf+k-(j+1)}_D}\,
\|\beta \partial_t^{j-1} u\|_{L^2_t H^{\hf+k-(j-1)}_D}\,.
\end{equation*}
To bound \eqref{uderivs}, and conclude the proof,
we use the equation $(\partial_t^2-\Delta)u=F$ to express
$$
\partial_t^{k-2l}\Delta^lu=\partial_t^ku-\sum_{2j\le k-2}\partial_t^{k-2-2j}\Delta^j F\,.
$$
The resulting terms on the right may then be bounded in the appropriate norms using
\eqref{utbounds} and \eqref{Finftybounds}.
\end{proof}

The estimate that we shall require for the existence proof is the
following. It is valid provided that $\phi\,,\psi\,,\tilde\phi\,,\tilde\phi\in
C^\infty(\R^2)$
take values in $[0,1]$, with
\begin{equation}
\text{supp}(\phi),\,\text{supp}(\tilde\phi)\subset\{|x|<3R\}\,,\quad
\text{supp}(\psi),\,\text{supp}(\tilde\psi)\subset\{|x|>2R\}\,,\quad
\tilde\phi+\tilde\psi\ge 1\,.
\qquad
\end{equation}
We will additionally assume
that each is a radial function, and that $\phi=1$ (respectively $\psi=1$)
on a neighborhood of the support of $\tilde\phi$ (respectively $\tilde\psi$).

\begin{corollary}\label{coro-3.2}
Suppose that $\gamma$, $(q,r)$ and $(\tilde q,\tilde r)$ are as in \eqref{3.9}
and \eqref{3.10}. Suppose also that
$F$ satisfies the Dirichlet compatibility conditions of order $1+\gamma$. Then
for the solutions $u$ of \eqref{3.1} with vanishing Cauchy data,
\begin{multline}\label{3.17}
\sum_{|\alpha|\le 1}\Bigl(\,
\|Z^\alpha u\|_{L^\infty_t L^{s_\gamma}_x}
+\|\psi\,\Gamma^\alpha u\|_{L^q_tL^r_{|x|}L^2_\theta}
+\|\phi\,\Gamma^\alpha u\|_{L^q_tL^{s_\gamma}_x}
\, \Bigr)
\\
\lesssim
\sum_{|\alpha|\le 1}
\Bigl(\,
\|\tilde\psi\,\Gamma^\alpha F\|_{L^{\tilde q}_tL^{\tilde r}_{|x|}L^2_\theta}
+\|\tilde\phi\,\Gamma^\alpha F\|_{L^{\tilde q}_tL^{s_{1-\gamma}'}_x}
\, \Bigr)\,,
\end{multline}
where all norms are taken over ${\mathbb R}_+\times \Omega$.
\end{corollary}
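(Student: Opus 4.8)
The plan is to derive Corollary~\ref{coro-3.2} from Lemma~\ref{lemma3.1} (the $\gamma=\tfrac12$ case with $k=0,2$) together with the $\gamma$-dependent but derivative-free estimate \eqref{3.11}, by interpolation, followed by a cutoff-and-commutator bookkeeping step that converts the $X_{r,\gamma}$ and $X'_{\tilde r,1-\gamma}$ norms into the unmixed cutoff norms appearing in \eqref{3.17}. First I would record the two endpoint estimates with derivatives: for $\gamma=\tfrac12$, applying \eqref{3.18} with $k=0$ and $k=2$ and interpolating (the admissible index set for the vector-field count is an interval, so $|\alpha|\le 1$ follows) gives
\begin{equation*}
\sum_{|\alpha|\le1}\Bigl(\|\Gamma^\alpha u\|_{L^\infty_t\dot H^{1/2}}+\|\Gamma^\alpha\partial_t u\|_{L^\infty_t\dot H^{-1/2}}+\|\Gamma^\alpha u\|_{L^q_tX_{r,1/2}}\Bigr)\lesssim\sum_{|\alpha|\le1}\|\Gamma^\alpha F\|_{L^{\tilde q'}_tX'_{\tilde r,1/2}}\,,
\end{equation*}
for $(q,r),(\tilde q,\tilde r)$ as in \eqref{3.9}--\eqref{3.10} with $\gamma=\tfrac12$. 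The other endpoint is the $L^\infty_t$ statement with $0<\gamma<1$: from \eqref{3.7} applied to $\Gamma^\alpha u$ (legitimate since $Z$ commutes with $\square$ and preserves the Dirichlet condition, and $\partial_t$ trivially does), and using the inhomogeneous version, one has $\sum_{|\alpha|\le1}\|\Gamma^\alpha u\|_{L^\infty_tX_{s_\gamma,\gamma}}$ bounded by the data and forcing in the dual norm.

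Next I would interpolate these two families in $\gamma$ exactly as \eqref{3.6}+\eqref{3.7} were interpolated to produce \eqref{3.8}: the convex hull of the horizontal segment $\{1/q=0\}$ and the half-open segment $\{\gamma=\tfrac12,\ 1/q=\tfrac12-2/r\}$ is the shaded region of Figure~\ref{figue2}, which is precisely the set of $(q,r,\gamma)$ satisfying \eqref{3.9}. This yields the derivative-analogue of \eqref{3.11}:
\begin{equation*}
\sum_{|\alpha|\le1}\Bigl(\|\Gamma^\alpha u\|_{L^\infty_t\dot H^\gamma}+\|\Gamma^\alpha\partial_t u\|_{L^\infty_t\dot H^{\gamma-1}}+\|\Gamma^\alpha u\|_{L^q_tX_{r,\gamma}}\Bigr)\lesssim\sum_{|\alpha|\le1}\|\Gamma^\alpha F\|_{L^{\tilde q'}_tX'_{\tilde r,1-\gamma}}\,.
\end{equation*}
The final step is purely a matter of replacing the $X$-norms by the cutoff norms in \eqref{3.17}. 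For the left side: by definition \eqref{3.2}, $\|\Gamma^\alpha u\|_{L^q_tX_{r,\gamma}}$ controls $\|\Gamma^\alpha u\|_{L^q_tL^r_{|x|}L^2_\theta(|x|>2R)}$ and $\|\Gamma^\alpha u\|_{L^q_tL^{s_\gamma}_x(|x|<3R)}$; since $\psi$ is supported in $|x|>2R$ and $\phi$ in $|x|<3R$, and $\psi$ is radial so $Z^\alpha(\psi v)=\psi\,Z^\alpha v$ up to terms where a $\partial_j$ hits $\psi$ (those are again supported in the annulus, radial, and lower order, absorbable via local energy / \eqref{3.7}), the terms $\|\psi\Gamma^\alpha u\|_{L^q_tL^r_{|x|}L^2_\theta}$ and $\|\phi\Gamma^\alpha u\|_{L^q_tL^{s_\gamma}_x}$ are dominated; the $\|Z^\alpha u\|_{L^\infty_tL^{s_\gamma}_x}$ term comes from $\|\Gamma^\alpha u\|_{L^\infty_t\dot H^\gamma}$ via Sobolev embedding $\dot H^\gamma(\Omega)\subset L^{s_\gamma}(\Omega)$. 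For the right side: by \eqref{3.12}, choosing the decomposition $F=\tilde\phi F\cdot\tfrac{1}{\tilde\phi+\tilde\psi}+\tilde\psi F\cdot\tfrac{1}{\tilde\phi+\tilde\psi}$ (legitimate since $\tilde\phi+\tilde\psi\ge1$, with the first piece supported in $|x|<3R$ and the second in $|x|>2R$), one bounds $\|\Gamma^\alpha F\|_{X'_{\tilde r,1-\gamma}}$ by $\|\tilde\phi\Gamma^\alpha F\|_{L^{s'_{1-\gamma}}_x}+\|\tilde\psi\Gamma^\alpha F\|_{L^{\tilde r'}_{|x|}L^2_\theta}$ modulo the radial lower-order commutator terms, and since $\tilde r'=\tilde q'$-admissible pairs are as required; here $\tilde q$ versus $\tilde q'$ must be tracked carefully (the Corollary writes $L^{\tilde q}_t$ on $F$, matching the $L^{\tilde q}_t X'$ convention of \eqref{l2decay} rather than the $L^{\tilde q'}_t$ of \eqref{3.11}, so I would adopt whichever normalization is consistent with the statement as printed).

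The main obstacle is the second step's commutator bookkeeping near $\partial\Omega$: the vector fields $Z$ (specifically $\partial_\theta$) do not preserve the Dirichlet space cleanly, which is exactly why the $X$-norms were introduced with an $L^{s_\gamma}_x$ piece near the obstacle in the first place. The radiality of $\phi,\psi,\tilde\phi,\tilde\psi$ and the nesting hypotheses ($\phi\equiv1$ near $\mathrm{supp}\,\tilde\phi$, etc.) are designed precisely so that when $\partial_j$ lands on a cutoff the resulting error is smooth, compactly supported in an annulus bounded away from $\partial\Omega$ in the $\psi$ case, and of the same or lower order — these errors are then reabsorbed using the already-established estimates \eqref{3.7}, \eqref{3.8}, \eqref{3.11} and the local energy decay \eqref{3.5}, in the same spirit as the argument on pp.~2803--2805 of \cite{HMSSZ}. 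I expect no genuinely new difficulty beyond carefully matching norms; the interpolation step itself is formal once the two endpoint families with $|\alpha|\le1$ are in hand.
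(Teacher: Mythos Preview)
Your proposal has a genuine gap at the second endpoint. You claim that the $L^\infty_t$ (or more generally, the $0<\gamma<1$) endpoint with $|\alpha|\le 1$ follows by applying \eqref{3.7} (or its inhomogeneous version) to $\Gamma^\alpha u$, writing that this is ``legitimate since $Z$ commutes with $\square$ and preserves the Dirichlet condition.'' While $Z$ does commute with $\square$, the spatial fields $\partial_1,\partial_2,\partial_\theta$ do \emph{not} preserve the Dirichlet condition on a general obstacle, so $\Gamma^\alpha u$ is not a solution of \eqref{3.1} and \eqref{3.7}/\eqref{3.11} cannot be applied to it directly. You do flag this in your final paragraph, but your proposed fix---reabsorbing the boundary/commutator error terms via local energy decay \eqref{3.5} ``in the same spirit as pp.~2803--2805 of \cite{HMSSZ}''---is exactly what fails for $\gamma\ne\tfrac12$ in two dimensions. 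The paper states this explicitly just before Lemma~\ref{lemma3.1}: the \cite{HMSSZ} mechanism for handling those errors requires local energy decay at regularity $\gamma$, which in $n=2$ is available only at $\gamma=\tfrac12$. So your interpolation has only one legitimate endpoint, the $\gamma=\tfrac12$ family from Lemma~\ref{lemma3.1}.

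The paper's route is different: it interpolates \emph{simultaneously} in the derivative count $k$ and in $\gamma$, taking as one endpoint \eqref{3.18} (arbitrary even $k$, fixed $\gamma=\tfrac12$) and as the other \eqref{3.11} ($k=0$, arbitrary $0<\gamma<1$), to hit $k=1$ at the target $\gamma$. The nontrivial work is justifying this two-parameter interpolation: the paper rewrites $\sum_{|\alpha|\le k}\|\phi\,\Gamma^\alpha\cdot\|$ near the obstacle in terms of $\partial_{t,x}$-derivatives (embedding $\Omega\cap\{|x|<3R\}$ in a compact manifold with boundary) and $\sum_{|\alpha|\le k}\|\psi\,\Gamma^\alpha\cdot\|$ away from it in $\partial_{t,r,\theta}$-derivatives, so that both become genuine mixed-norm Sobolev scales. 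That these form analytic interpolation scales (and agree with the integer-order norms) relies on boundedness of Calder\'on--Zygmund operators on mixed-norm $L^p$ spaces, for which the paper invokes Lizorkin \cite{Liz} and the UMD framework \cite{Zim}. The solution operator $W$ is then split into the four pieces $\phi W\tilde\phi$, $\phi W\tilde\psi$, $\psi W\tilde\phi$, $\psi W\tilde\psi$, and each is interpolated as an operator between these scales; the cutoff structure you discuss is built into this decomposition from the start rather than handled by a separate commutator step.
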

\begin{proof}
Estimate \eqref{3.17} is obtained by interpolating estimate \eqref{3.18},
which requires $\gamma=\hf$ but allows arbitrarily high order powers of $\Gamma$, with
estimate \eqref{3.11}, which holds for all $0<\gamma<1$, but with 0 powers
of $\Gamma$. We thus need to justify the interpolation step by expressing the norms
in terms of analytic scales of spaces.
We start by noting that
$$
\sum_{|\alpha|\le 1}\|\phi\,\Gamma^\alpha u\|_{L^q_tL^{s_\gamma}_x}
\approx
\sum_{|\alpha|\le 1}\|\phi\,\partial_{t,x}^\alpha \,u\|_{L^q_tL^{s_\gamma}_x}\,,
$$
and
$$
\sum_{|\alpha|\le 1}\|\psi\,\Gamma^\alpha u\|_{L^q_tL^r_{|x|}L^2_\theta}
\approx
\sum_{|\alpha|\le 1}\|\psi\,\partial_{t,r,\theta}^\alpha \,u\|_{L^q_tL^r_{|x|}L^2_\theta}\,,
$$
with similar equalities for the norms in $F$. By embedding $\Omega\cap\{|x|<3R\}$ in a
compact manifold with boundary, the first norm is dominated by (with a different choice of
$\phi$)
$$
\sum_{|\alpha|\le 1}\|D^\alpha(\phi u)\|_{L^q_tL^{s_\gamma}_x(\R\times\Omega')}
$$
which is a Sobolev norm on a mixed-norm space. That the fractional order Sobolev norms
$$
\|(1- \partial_t^2-\Delta_x)^{\sigma/2} u\|_{L^q_tL^{s_\gamma}_x(\R\times\Omega')}
$$
form an analytic scale of spaces, and that norms for integer $\sigma$ coincide with
partial derivatives of order up to $\sigma$ belonging to the mixed-norm space, follows
from the fact that Calder\'on-Zygmund operators are bounded in mixed-norm $L^p$ spaces,
provided that all Lebesgue exponents lie in the range $(1,\infty)$. See Lizorkin
\cite{Liz} for the case of $\R^n$. The product manifold setting falls under the theory
of UMD spaces; see, for example, \cite{Zim}.

The norms over $\{|x|>2R\}$ are similarly product norms over polar coordinates.
Precisely,
$$
\sum_{|\alpha|\le 1}\|\psi\,u\|_{L^q_tL^r_{|x|}L^2_\theta}\approx
\biggl(\,\int_\R\,\biggl(\,\int_\R\;\biggl(\,\int_{S^1}|\psi(\rho)u(t,\rho,\theta)|^2
\,d\theta\,\biggr)^{r/2}\langle\rho\rangle\,d\rho\,\biggr)^{q/r}dt\biggr)^{1/q}\,.
$$

If $W$ denotes the forward solution operator to the wave equation on $\Omega$,
in that $u=WF$, then \eqref{3.17} can be stated in terms of mapping properties
of $\phi W\!\tilde\phi$, $\phi W\!\tilde\psi$, $\psi W\!\tilde\phi$ and $\psi W\!\tilde\psi$
between such spaces, where the cutoffs $\phi$, etc., may vary from above.
For example, we need the bound, for $k=1$,
\begin{equation}\label{nearbound}
\sum_{|\alpha|\le k}\|D^\alpha(\phi W\!\tilde\phi F)\|_{L^q_tL^{s_\gamma}_x(\R\times\Omega')}
\lesssim
\sum_{|\alpha|\le k}\|D^\alpha F\|_{L^{\tilde q}_tL^{s_{1-\gamma}'}_x(\R\times\Omega')}\,,
\end{equation}
where we may think of $F$ as a function of $(t,x)\in \R\times\Omega'\,.$
By \eqref{3.12} and Lemma \ref{lemma3.1}, this holds for all even integers
$k$ provided $\gamma=\hf$, and by \eqref{3.11} it holds for $k=0$ for all $0<\gamma<1$.
Since the relations \eqref{3.9} and \eqref{3.10} are linear in the reciprocals of
$s_\gamma$, $q$, and $r$ (respectively $s'_{1-\gamma}$, $\tilde q$, and $\tilde r$),
we may interpolate to obtain \eqref{nearbound} for $k=1$ at any point in the shaded
region. The estimates for the other terms follow similarly, using the embedding
$\dot H^\gamma\subset L^{s_\gamma}$ for the term $\|u\|_{L^\infty_t L^{s_\gamma}}$.

We note that if $.2<\gamma<.5$, as in our application, then it suffices to
consider $k\le 4$ for the estimate \eqref{3.18}, since one may take
the other endpoint with $k=0$ arbitrarily close to the lower right corner.
\end{proof}

Corollary \ref{coro-3.2} gives us the required estimates for the inhomogeneous equation,
but we also need the following estimates for the homogeneous wave equation.
\begin{corollary}\label{coro-3.3}
Suppose that $\gamma$ and $(q,r)$ are as in \eqref{3.9}.
Suppose also that $f|_{\partial \Omega}=0$.
Then
for the solutions $u$ of \eqref{3.1} with $F\equiv 0$,
\begin{multline}\label{3.17'}
\sum_{|\alpha|\le 1}\Bigl(\,
\|Z^\alpha u\|_{L^\infty_t L^{s_\gamma}_x}
+\|\psi\,\Gamma^\alpha u\|_{L^q_tL^r_{|x|}L^2_\theta}
+\|\phi\,\Gamma^\alpha u\|_{L^q_tL^{s_\gamma}_x}
\, \Bigr)
\\
\lesssim
\sum_{|\alpha|\le 2}
\|Z^\alpha f\|_{L^{s'_{1-\gamma}}(\Omega)}+\sum_{|\alpha|\le 1}\|Z^\alpha g\|_{L^{s'_{1-\gamma}}(\Omega)}\,.
\end{multline}
\end{corollary}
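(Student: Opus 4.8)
The plan is to follow the proof of Corollary~\ref{coro-3.2} essentially verbatim. There, the inhomogeneous estimate \eqref{3.17} was obtained by interpolating estimate \eqref{3.18} — which carries arbitrarily many $\Gamma$-derivatives but is restricted to $\gamma=\hf$ — against \eqref{3.11}, which holds for every $0<\gamma<1$ but with no $\Gamma$-derivatives. I would prove \eqref{3.17'} in the same way, using the $F\equiv 0$ specializations of these two families and then converting the resulting homogeneous-Sobolev bound on the data into the $L^{s'_{1-\gamma}}$ bound appearing on the right of \eqref{3.17'}.

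The two endpoint estimates are as follows. Lemma~\ref{lemma3.1} with $F\equiv 0$ provides, for every even $k$, estimate \eqref{3.18} with the $F$-term on the right deleted, i.e.\ with right side $\sum_{|\alpha|\le k}\bigl(\|Z^\alpha f\|_{\dot H^{\hf}(\Omega)}+\|Z^\alpha g\|_{\dot H^{-\hf}(\Omega)}\bigr)$; this is the endpoint carrying many $\Gamma$'s. Opposite it sits \eqref{3.8}, which together with the Sobolev embedding $\dot H^\gamma(\Omega)\subset L^{s_\gamma}(\Omega)$ and the support properties of $\phi,\psi$ controls the left side of \eqref{3.17'} in the special case $|\alpha|=0$ by $\|f\|_{\dot H^\gamma(\Omega)}+\|g\|_{\dot H^{\gamma-1}(\Omega)}$, for every $(\gamma,q,r)$ obeying \eqref{3.9}. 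Interpolating between these two families exactly as in the proof of Corollary~\ref{coro-3.2} — realizing the near-obstacle norms as fractional Sobolev norms on a mixed-norm space over a compact manifold with boundary, the far norms as product norms in polar coordinates, and using that the relations \eqref{3.9} are affine in the reciprocals of $s_\gamma$, $q$, $r$, so that it suffices to take the $\gamma=\hf$ endpoint with $k\le 4$ together with a $0$-derivative endpoint near the lower-right corner of Figure~\ref{figue2} — produces, for all $(\gamma,q,r)$ as in \eqref{3.9},
\[
\sum_{|\alpha|\le1}\Bigl(\|Z^\alpha u\|_{L^\infty_t L^{s_\gamma}_x}+\|\psi\,\Gamma^\alpha u\|_{L^q_t L^r_{|x|}L^2_\theta}+\|\phi\,\Gamma^\alpha u\|_{L^q_t L^{s_\gamma}_x}\Bigr)\lesssim\sum_{|\alpha|\le1}\Bigl(\|Z^\alpha f\|_{\dot H^{\gamma}(\Omega)}+\|Z^\alpha g\|_{\dot H^{\gamma-1}(\Omega)}\Bigr).
\]

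It remains to bound the right-hand side by that of \eqref{3.17'}. Since $1/s'_{1-\gamma}=1-\gamma/2$, Sobolev embedding gives $\sum_{|\alpha|\le1}\|Z^\alpha f\|_{\dot H^\gamma(\Omega)}\lesssim\sum_{|\alpha|\le2}\|Z^\alpha f\|_{L^{s'_{1-\gamma}}(\Omega)}$ — here one uses $[\partial_j,\,x_1\partial_2-x_2\partial_1]\in\{Z\}$ to rewrite $\partial^\beta Z^\alpha$ with $|\beta|,|\alpha|\le1$ as a combination of $Z^{\alpha'}$, $|\alpha'|\le2$ — and Hardy--Littlewood--Sobolev gives $\sum_{|\alpha|\le1}\|Z^\alpha g\|_{\dot H^{\gamma-1}(\Omega)}\lesssim\sum_{|\alpha|\le1}\|Z^\alpha g\|_{L^{s'_{1-\gamma}}(\Omega)}$; these are precisely \eqref{sob} (note $q_p=s'_{1-\gamma_p}$) and its analogue for the other values of $\gamma$ in the range \eqref{3.9}. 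Substituting yields \eqref{3.17'}. The hypothesis $f|_{\partial\Omega}=0$ is used both for \eqref{3.1} to be well posed in the relevant energy class and to reduce, by density of $C_0^\infty(\Omega)$, to data for which the endpoint \eqref{3.18} is directly applicable, the a priori estimate \eqref{3.17'} itself serving to pass to the limit.

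The main obstacle is the one already constraining Corollary~\ref{coro-3.2}: in dimension two the machinery behind \eqref{3.18} (Theorem~1.4 of \cite{HMSSZ}) supplies estimates with $\Gamma$-derivatives only at the single regularity $\gamma=\hf$, so every estimate with $\gamma\ne\hf$ that carries $\Gamma$-derivatives must be produced by interpolation. The work therefore lies in checking that the mixed-norm spaces in play form complex-interpolation scales — via boundedness of Calder\'on--Zygmund operators on mixed-norm $L^p$ with all exponents in $(1,\infty)$, as in Corollary~\ref{coro-3.2} — and in bookkeeping the interpolation so that the $\Gamma$-derivative count ends at $|\alpha|\le1$ while $\gamma$ remains in \eqref{3.9}; the Sobolev conversion of the data norms and the density argument are routine.
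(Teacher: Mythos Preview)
Your approach has a genuine gap at the interpolation step. You propose to interpolate between \eqref{3.18} (with $F\equiv 0$) and \eqref{3.8}, keeping the $\dot H^\gamma$-norms on the right-hand side, to obtain
\[
\text{LHS of \eqref{3.17'}}\;\lesssim\;\sum_{|\alpha|\le1}\Bigl(\|Z^\alpha f\|_{\dot H^{\gamma}(\Omega)}+\|Z^\alpha g\|_{\dot H^{\gamma-1}(\Omega)}\Bigr),
\]
and only afterwards apply Sobolev embedding. But this interpolation requires that the family of norms $\sum_{|\alpha|\le k}\|Z^\alpha f\|_{\dot H^\gamma(\Omega)}$ form an analytic interpolation scale \emph{simultaneously in $k$ and $\gamma$}. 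The paper explicitly flags this as a difficulty (see the remark just before the proof of Corollary~\ref{coro-3.3}): the Dirichlet spaces $\dot H^\gamma(\Omega)$ are defined spectrally, the $Z$-vector fields include the rotation, and there is no obvious way to realize these mixed derivative-plus-fractional-power norms on a domain with boundary as a complex interpolation family. The justification you import from Corollary~\ref{coro-3.2} treats only the \emph{left}-hand norms (Lebesgue-type mixed norms localized by $\phi,\psi$) and the \emph{Lebesgue} norms on $F$; it says nothing about $\dot H^\gamma(\Omega)$ with $Z$-derivatives.

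The paper's fix is to reverse the order: apply Sobolev embedding \emph{first}, converting \eqref{3.8} and \eqref{3.18} into the weaker estimates \eqref{aa} and \eqref{bb} whose right-hand sides are $\sum\|Z^\alpha f\|_{L^{s'_{1-\gamma}}}$ and $\sum\|Z^\alpha g\|_{L^{s'_{1-\gamma}}}$. These are ordinary (integer-order) Sobolev norms built on Lebesgue spaces, and after localizing by $\tilde\phi,\tilde\psi$ they can be identified with standard $W^{k,p}$ norms on a compact manifold (near the obstacle) or polar-coordinate product norms (far away), for which the interpolation theory is routine. Only then does one interpolate. Note that your route, if it worked, would give a strictly stronger conclusion than \eqref{3.17'}; the paper's choice of the $L^{s'_{1-\gamma}}$ norm in \eqref{1.5} is precisely to sidestep the interpolation issue you have swept under ``exactly as in the proof of Corollary~\ref{coro-3.2}.''
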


This is the one step in the proof of Theorem \ref{thm1.1} where condition \eqref{1.5}
is used. Indeed, one can replace \eqref{1.5} by any norm condition which implies that
the left hand side of \eqref{3.17'} is sufficiently small (where $\gamma=\gamma_p$).
The norms we are using for the initial data are stronger than the norms in \eqref{19} using inhomogeneous Sobolev spaces, since
\begin{equation}\label{sobolev}
W^{1,s'_{1-\gamma}}\subset \dot H^\gamma\,, \qquad L^{s'_{1-\gamma}}\subset \dot H^{\gamma-1}\ .
\end{equation}
Note also that $s'_{1-\gamma_p}=q_p$, where $q_p$ is as in \eqref{1.5}.
We use the above norm due to the difficulty in showing that
$\sum_{|\alpha|\le k}\|Z^\alpha f\|_{\dot H^\gamma(\Omega)}$ defines an interpolation
scale of spaces, simultaneously in $k$ and $\gamma$.

Because of \eqref{sobolev} (see \eqref{sob}), we immediately find that when $F\equiv 0$, \eqref{3.8} and \eqref{3.18} respectively imply the somewhat weaker versions
\begin{equation}\label{aa}
\|u\|_{L^\infty_tL^{s_\gamma}}+
\|u\|_{L^q_tX_{r,\gamma}}\lesssim \sum_{|\alpha|\le1}\|Z^\alpha f\|_{L^{s'_{1-\gamma}}}
+\|g\|_{L^{s'_{1-\gamma}}},\end{equation}
and, for $k=0,2,4,\dots$ and $(f,g)$ satisfying the compatibility conditions of order
$k+\tfrac12$,
\begin{equation}\label{bb}\sum_{|\alpha|\le k}
\bigl(\, \|\Gamma^\alpha u\|_{L^\infty_tL^4}
+
\|\Gamma^\alpha u\|_{L^q_t X_{r,\frac 12}}
\, \bigr)
\lesssim \sum_{|\alpha|\le k+1}\|Z^\alpha f\|_{L^{\frac 43}}+\sum_{|\alpha|\le k}\|Z^\alpha
g\|_{L^{\frac 43}}\,.
\end{equation}
These are the inequalities that we use in the interpolation argument
to get \eqref{3.17'}.

The interpolation arguments are similar to those used to prove
the inhomogeneous estimate \eqref{3.17}.
For example, for $f$ we have
$$\sum_{|\alpha|\le 2}\|Z^\alpha f\|_{L^{{s'_{1-\gamma}}}(\Omega)}
\approx \sum_{|\alpha|\le 2}\|\tilde\psi Z^\alpha f\|_{L^{{s'_{1-\gamma}}}(\Omega)}
 +\sum_{|\alpha|\le 2}\|\tilde\phi Z^\alpha f\|_{L^{{s'_{1-\gamma}}}(\Omega)}\ ,
$$
and
\begin{align*}
\sum_{|\alpha|\le 2}\|\tilde\psi\, Z^\alpha f\|_{L^{{s'_{1-\gamma}}}(\Omega)}
&\approx  \sum_{|\alpha|\le 2}\|\tilde\psi\, \partial_{r,\theta}^\alpha f\|_{L^{{s'_{1-\gamma}}}(\Omega)}\,,\\
\sum_{|\alpha|\le 2}\|\tilde\phi \,Z^\alpha f\|_{L^{{s'_{1-\gamma}}}(\Omega)}
&\approx  \sum_{|\alpha|\le 2}\|\tilde\phi\,\partial_x^\alpha f\|_{L^{{s'_{1-\gamma}}}(\Omega)}\,.\rule{0pt}{18pt}
\end{align*}
The term $\sum_{|\alpha|\le 2}\|\tilde\phi\,\partial_x^\alpha f\|_{L^{{s'_{1-\gamma}}}(\Omega)}$, can be bounded from above and below by
$$
\|\tilde\phi f\|_{W^{2,{s'_{1-\gamma}}}}
=\sum_{|\alpha|\le 2} \|\partial_x^\alpha (\tilde\phi f)\|_{L^{{s'_{1-\gamma}}}}
$$
(with different choices of $\tilde\phi$), which is a standard Sobolev space norm.

If $U$ denotes the solution operator to the wave equation \eqref{3.1} on $\Omega$, with $F, g\equiv 0$,
then \eqref{3.17'}
for this special case
can be restated in terms of mapping properties
of $\phi U\!\tilde\phi$, $\phi U\!\tilde\psi$, $\psi U\!\tilde\phi$ and $\psi U\!\tilde\psi$
between these spaces.
For example, we need the bound, for $k=1$,
\begin{equation}\label{nearbound'}
\sum_{|\alpha|\le k}\|\partial_{t,x}^\alpha(\phi U\!\tilde\phi f)\|_{L^q_t L^{s_\gamma}_x (\R\times\Omega')}
\lesssim
\sum_{|\alpha|\le k+1}\|\partial_x^\alpha f\|_{L^{s'_{1-\gamma}}_x(\Omega')}\,,
\end{equation}
where we may think of $f$ as a function of $x\in \Omega'\,.$
By
\eqref{bb}
this holds for all even integers
$k$ provided $\gamma=\hf$, and by
\eqref{aa}
it holds for $k=0$ for all $0<\gamma<1$.
Since the relation \eqref{3.9} is linear in the reciprocals of
$s_\gamma$, $q$, and $r$,
we may interpolate to obtain \eqref{nearbound'} for $k=1$ at any point in the shaded
region
of Figure~2.

Therefore, we conclude the estimate \eqref{3.17'} is valid in the special case
where $g\equiv 0$.
Since similar arguments apply to the case where $f\equiv 0$, we get \eqref{3.17'}.


\medskip

We shall now show how we can use \eqref{3.17} and \eqref{3.17'} to prove our existence results.

\medskip

\noindent {\bf Proof of Theorem~\ref{thm1.1}:}
As in our proof of Glassey's theorem, it suffices to consider the case
of $p_c<p<5$. The proof in the obstacle case requires more care in selecting
the indices $q$ and $r$, since the case $r=1$ is not allowed in \eqref{3.17},
as opposed to its free-space variant \eqref{15}.
We thus need to
check that we can choose exponents whose ratio is $p$, so that we have estimates
that iterate well for equations like $\square u = |u|^p$.

To do this, assume given $p$ such that $p_c<p<5$. We will take
$\gamma=\gamma_p$ to be the scaling index for $\square u=|u|^p$,
$$
\gamma_p=1-\frac2{p-1}\,,
$$
so that $\frac {5-\sqrt{17}}4<\gamma_p<\hf\,.$ 
As noted before,
the condition $\hf-\gamma_p<\frac 1p$ is equivalent to the condition
$p>p_c$. This tells us that
\begin{equation}\label{3.15}
\frac 12-\gamma_p<\frac 1{r} \quad \text{if}\quad p< r<p+\delta(p)\,,
\end{equation}
for small $\delta(p)>0$. We fix such an $r$, and determine
$\tilde r$ by setting ${\tilde r}'=r/p$.
Since we may assume $\delta(p)<p$, then $p<r<2p$, so that
$\tilde r\in (2,\infty)$.

The equality in conditions \eqref{3.9} and \eqref{3.10} determines that,
with $\gamma=\gamma_p$,
\begin{equation*}
q(\gamma_p,r)=\frac{p-1}2 \, \cdot \frac{r}{r-(p-1)}\,,
\qquad
[\tilde q(1-\gamma_p,\tilde r)]' = \frac{p-1}2 \cdot \frac{\tilde
r'}{p\tilde r'-(p-1)}\,.
\end{equation*}
Since ${\tilde r}'=r/p$, we have
\begin{equation}\label{3.15'}
[\tilde q(1-\gamma_p, \tilde r)]' = q(\gamma_p,r)/p \,.
\end{equation}

The last inequalities in \eqref{3.9} and \eqref{3.10}
are then equivalent to the conditions
\begin{equation*}
\frac12 - \gamma_p < \frac1r \quad \text{and} \quad
\gamma_p-\frac12 < \frac1{\tilde r}\,.
\end{equation*}
The first condition
is satisfied by \eqref{3.15}, and the second is satisfied since
$\gamma_p < \hf$.

To conclude the verification of \eqref{3.9}-\eqref{3.10}, we check that
$2<q,r,\tilde q,\tilde r<\infty$. By construction this holds for $r,\tilde r$.
We next observe that
$q(\gamma_p,r)$ is a decreasing function of $r$ for $r>p$, and
$$
p+1<q(\gamma_p,p)<2p\,.
$$
The first inequality here is equivalent to $p^2-3p-2>0$, and the second
to $p<5$. Taking $\delta(p)$ smaller if necessary,
it follows that $q(\gamma_p,r)\in (p,2p)\subset(2,\infty)$,
and hence $q(1-\gamma_p,\tilde r)\in (2,\infty)$ by \eqref{3.15'}.

With this choice of indices, we then have the following case of \eqref{3.17},
valid for solutions $u$ with vanishing Cauchy data:
\begin{multline}\label{3.16}
\sum_{|\alpha|\le 1}
\Bigl(\,
\|\psi\,\Gamma^\alpha u\|_{L^q_tL^r_{|x|}L^2_\theta}
+\|\phi\,\Gamma^\alpha u\|_{L^q_tL^{s_{\gamma_p}}_x}
+\|Z^\alpha u_k\|_{L^\infty_t L^{s_{\gamma_p}}_x}
\, \Bigr)\quad
\\
\lesssim
\sum_{|\alpha|\le 1}
\Bigl(\,
\|\tilde\psi\,\Gamma^\alpha \square u\|_{L^{q/p}_tL^{r/p}_{|x|}L^2_\theta}
+\|\tilde\phi\,\Gamma^\alpha \square u\|_{L^{q/p}_tL^{s_{1-\gamma_p}'}_x}
\,\Bigr)
\,.
\end{multline}

We now assume that the Cauchy data $(f,g)$
satisfies the smallness condition \eqref{1.5} (where $q_p$ is the same as our notation $s'_{1-\gamma_p}$),
and let $u_0$ solve the Cauchy problem \eqref{3.1}
with forcing term
$F\equiv0$.  We then iteratively define $u_k$, $k=1,2,3,\dots$,
by requiring that it solves
the equation
\begin{equation*}
\begin{cases}
\square u_k(t,x)=F_p(u_{k-1}(t,x)), \quad (t,x)\in {\mathbb R}_+\times \Omega
\\
u_k(t,x)=0, \quad (t,x)\in {\mathbb R}_+\times \partial\Omega
\\
u_k|_{t=0}=f, \quad \partial_t u_k|_{t=0}=g.
\end{cases}
\end{equation*}
Our goal is to show that if $\varepsilon>0$ in \eqref{1.5} is small enough
then so is
\begin{equation*}
M_k=\sum_{|\alpha|\le 1}\Bigl( \,
\|\psi\,\Gamma^\alpha u_k\|_{L^q_tL^r_{|x|}L^2_\theta }
+\|\phi\,\Gamma^\alpha u_k\|_{L^q_tL^{s_{\gamma_p}}_x}
+\|Z^\alpha u_k\|_{L^\infty_t L^{s_{\gamma_p}}_x}
 \, \Bigr)
\end{equation*}
for every $k=0,1,2,\dots$, where we fix $r$
and $q=q(\gamma_p,r)$ as in \eqref{3.17}.

For $k=0$, it follows from \eqref{3.17'} that
$M_0\le C_0 \varepsilon$, with $C_0>1$ a fixed constant.  For $k=1,2,\dots$, we can then use \eqref{3.17} and \eqref{3.17'} to conclude
that
\begin{multline}\label{3.19}
M_k\le C_0\varepsilon+C_1\!\!\sum_{|\alpha|\le1} \Bigl(\|\tilde\psi\,\Gamma^\alpha
F_p(u_{k-1})\|_{L^{q/p}_tL^{r/p}_{|x|}L^2_\theta(|x|>2R)}+
\|\tilde\phi\,\Gamma^\alpha
F_p(u_{k-1})\|_{L^{q/p}_tL^{s_{1-\gamma_p}'}_x(|x|<3R)}\,\Bigr)
\\
=C_0\varepsilon + C_1 (I + II)\,,
\end{multline}
with $C_1$ another fixed constant.
Assuming that $M_{k-1}\le 2 C_0 \,\ep$, we will inductively show that
$M_k\le 2 C_0 \,\ep$.

We first note that since $s_{\gamma_p}=p-1>2$ and $n=2$, it follows from
Sobolev embedding on $\Omega$ that
$$
\|v\|_{L^\infty}\lesssim
\sum_{|\alpha|\le 1}\|\partial^\alpha v\|_{L^{s_{\gamma_p}}}
\ .
$$
This means that
$$
\|u_{k-1}(t,x)\|_{L^\infty_t L^\infty_x}\le C
M_{k-1}\le 2 C C_0 \,\ep\le 1\,,
$$
provided that $\ep$ is small enough,
which verifies the condition on $u$ in \eqref{1.2}. Our assumption
\eqref{1.2} on the nonlinear term, $F_p$, then implies that
\begin{equation}
\label{inhom}\sum_{|\alpha|\le1}|\Gamma^\alpha
F_p(u_{k-1})|\lesssim |u_{k-1}|^{p-1}\sum_{|\alpha|\le 1}
|\Gamma^\alpha u_{k-1}|\ .
\end{equation}

Since the collection $\{\Gamma\}$ contains $\partial_\theta$,
by Sobolev embedding on the circle we have
$$
\|v(|x|\, \cdot\, )\|_{L^\infty_\theta}\lesssim \sum_{|\alpha|\le 1}
\|\Gamma^\alpha v(|x|\, \cdot\, )\|_{L^2_\theta}\ , \quad |x|>2R\ .
$$
Consequently, since $\Omega$ contains the set $|x|>2R$,
it follows for fixed $|x|>2R$ and $t>0$ that
$$
\sum_{|\alpha|\le1}\|\Gamma^\alpha F_p(u_{k-1}(t,|x|\, \cdot \, ))\|_{L^2_\theta}
\lesssim \sum_{|\alpha|\le 1}\|\Gamma^\alpha u_{k-1}(t,|x|\, \cdot
\, )\|_{L^2_\theta}^p\ ,
$$
which means that $I\le C_2 M^p_{k-1}$, for some
uniform constant $C_2$.

To handle the term $II$ in \eqref{3.19}, we note that since
$s_{\gamma_p}>2$ and $n=2$, it follows from Sobolev embedding
on $\Omega \cap \{|x|<3 R\}$ that
$$
\|\tilde \phi v\|_{L^{2(p-1)} (|x|<3 R)}\lesssim
\sum_{|\alpha|\le 1}\|\phi Z^\alpha v\|_{L^{s_{\gamma_p}}(|x|<3 R)}
\,.
$$
Since $s_{1-\gamma}'<2$ satisfies
$$
\frac{1}{s_{1-\gamma}'}=\frac{1}{2}+\frac{1}{s_{\gamma}}\,,
$$
by H\"older's inequality we have, for each fixed $t$, that
\begin{multline*}\sum_{|\alpha|\le1}
\|\tilde\phi\,\Gamma^\alpha F_p(u_{k-1}(t,\cd))\|_{L^{s_{1-\gamma}'}(|x|<3R)}
\\
\lesssim \sum_{|\alpha|\le1}
\|\tilde\phi \,u_{k-1}\|_{L^{2(p-1)}(|x|<3R)}^{p-1}
\sum_{|\alpha|\le1}\|\phi\,\Gamma^\alpha
u_{k-1}(t,\cd)\|_{L^{s_{\gamma}}(|x|<3R)}
\\
\lesssim \sum_{|\alpha|\le1} \|\phi\,\Gamma^\alpha
u_{k-1}(t,\cd)\|_{L^{s_{\gamma}}(|x|<3R)}^p\,.
\end{multline*}
This implies that we also have $II\le C_3 M_{k-1}^p$, for some
uniform constant $C_3$, which together with the bound for $I$ gives
$$M_k\le C_0\,\varepsilon +C_1(C_2+C_3)M^p_{k-1}\le C_0\,\varepsilon +
C_1(C_2+C_3)(2 C_0 \,\ep)^p.$$
Thus, if $\varepsilon$ is sufficiently small,
we conclude that
\begin{equation}\label{3.20}
M_k\le 2C_0\,\varepsilon, \quad k=0,1,2,\cdots
\end{equation}

To finish the proof of the existence results, for $p_c<p<5$
we need to show that the
$u_k$ converge to a solution of \eqref{1.1}.
To do
this it suffices to show that $$A_k= \bigl\| \, u_k-u_{k-1}\,
\bigr\|_{L^{\infty}_t \dot H^{\gamma_p}}$$ tends geometrically to zero as $k\to
\infty$.  Since $|F_p(v)-F_p(w)|\lesssim |v-w|(\,
|v|^{p-1}+|w|^{p-1}\, )$ when $v$ and $w$ are small, the proof of
\eqref{3.20} can be adapted to show that, for small $\varepsilon>0$,
there is a uniform constant $C$ so that
$$A_k\le CA_{k-1}(M_{k-1}+M_{k-2})^{p-1},$$
which, by \eqref{3.20}, implies that $A_k\le \tfrac12A_{k-1}$ for
small $\varepsilon$.  Since $A_1$ is finite, the claim follows,
which finishes the proof of the existence results for the range of
$p_c<p<5$.

As in \S 2, the results for $p\ge5$ in Theorem~\ref{thm1.1}
follow from the above and the fact that the condition
\eqref{1.2} becomes weaker as $p$ increases. \qed


\begin{thebibliography}{7}

\bibitem{B} N. Burq, \textit{Global Strichartz estimates for nontrapping geometries: about an article by H. F. Smith and C. D. Sogge: ``Global Strichartz estimates for nontrapping perturbations of the Laplacian''}, Comm. Partial Differential Equations \textbf{25} (2000), 2171--2183.

\bibitem{ChKi01} M.~Christ, A.~Kiselev,
\textit{Maximal Functions Associated to Filtrations}, Jour. Func.
Anal. \textbf{179} (2001), 409--425.

\bibitem{DMSZ} Y.~Du, J.~Metcalfe, C.D.~Sogge, and Y.~Zhou, \textit{Concerning the Strauss conjecture and almost global existence for nonlinear Dirichlet-wave equations in 4-dimensions}, Comm. Partial Differential Equations  \textbf{33}  (2008),  1487--1506.


\bibitem{FaWa2}
D.~Fang, C.~Wang,  \textit{Some Remarks on Strichartz Estimates for
Homogeneous Wave Equation}, Nonlinear Anal. \textbf{65} (2006), no.
3, 697--706.



\bibitem{FaWa} D.~Fang, C.~Wang,
\textit{Weighted Strichartz Estimates with Angular Regularity and
their Applications}, Forum Math. \textbf{23} (2011), no. 1, 181--205.


\bibitem{G} R. T. Glassey, \textit{Existence in the large for $\square u=F(u)$ in two space dimensions,} Math. Z.  \textbf{178}  (1981),  233--261.




\bibitem{HMSSZ} K.~Hidano, J.~Metcalfe, H.~F.~Smith, C.~D.~ Sogge,
Y.~Zhou, \textit{ On Abstract Strichartz Estimates and the Strauss
Conjecture for Nontrapping Obstacles}, Trans. Amer. Math. Soc., \textbf{362}
(2010), 2789--2809.



%



%

%

\bibitem{Liz} P.~I.~Lizorkin,
\textit{Multipliers of Fourier integrals and bounds of convolution in spaces with mixed norms.
Applications},
Math.~USSR Izv., \textbf{4} no.~1 (1970), 225--255.


\bibitem{M} J. Metcalfe, \textit{Global Strichartz estimates for solutions to the wave equation exterior to a convex obstacle},  Trans. Amer. Math. Soc.  \textbf{356}  (2004),  4839--4855.

\bibitem{R} J. Ralston,
{\em Note on the decay of acoustic waves},
Duke Math. J. {\bf 46} (1979), 799-804.

\bibitem{SmSo00} H.~F.~Smith,  C.~D.~Sogge,
\textit{Global Strichartz estimates for nontrapping perturbations of
the Laplacian}, Comm. Partial Differential Equations {\textbf 25}
(2000), 2171--2183.


\bibitem{So08}
C.~D.~Sogge,  \textit{Lectures on nonlinear wave equations. Second edition}.
 International Press,  Boston,  MA,
2008.

\bibitem{SW} E.~M.~Stein, G.~Weiss,
\textit{Introduction to Fourier analysis on
Euclidean spaces}, Princeton Mathematical Series, No. 32. Princeton
University Press, Princeton, N.J., 1971.

\bibitem{St05} J.~Sterbenz,
\textit{Angular regularity and Strichartz estimates
for the wave equation},  With an appendix by Igor Rodnianski, Int.
Math. Res. Not. \textbf{2005} No.~4, 187--231.



\bibitem{V} B. R. Vainberg,
\textit{The short-wave asymptotic behavior of the solutions of stationary
problems, and the asymptotic behavior as $t\rightarrow \infty $ of the
solutions of nonstationary problems},
Russian Math. Surveys {\bf 30} (1975), 1--58.



%

\bibitem{Zim} F. Zimmermann,
\textit{On vector valued Fourier multiplier theorems},
Studia Math. {\bf 93} (1989), 201--222.


\end{thebibliography}
\end{document}